\documentclass[a4paper, 11pt]{amsart}
\usepackage[top=3cm, bottom=3cm, left = 2cm, right = 2cm]{geometry} 
\geometry{a4paper} 
\usepackage[utf8]{inputenc}
\usepackage{textcomp}
\usepackage{graphicx} 
\usepackage{amsmath,amssymb,amsthm}  
\usepackage{bm}  
\usepackage[pdftex,bookmarks,colorlinks,breaklinks]{hyperref}  
\usepackage{memhfixc} 
\usepackage{pdfsync}  

\usepackage[backend=bibtex, style=trad-alpha, doi=true, isbn=false, hyperref=true, backref=false, giveninits=true, abbreviate=true, sorting=nyt]{biblatex}
\addbibresource{lib.bib}

\hypersetup{colorlinks=true, linkcolor=blue, citecolor=magenta, filecolor=magenta, urlcolor=cyan}

\usepackage{orcidlink}

\newtheorem{theorem}{Theorem}

\newtheorem{lemma}{Lemma}
\newtheorem{obs}[lemma]{Observation}
\newtheorem{remark}{Remark}

\newcommand{\spam}{\mathrm{span}}

\numberwithin{equation}{section}
\numberwithin{lemma}{section}

\begin{document}

\title[Time-periodic solutions to the cubic wave equation]{Time-periodic solutions to the cubic wave equation:\\ an elementary constructive approach}
\author[Filip Ficek]{Filip Ficek\orcidlink{0000-0001-5885-7064}}
\address{University of Vienna, Faculty of Mathematics, Oskar-Morgenstern-Platz 1, 1090 Vienna, Austria}
\address{University of Vienna, Gravitational Physics, Boltzmanngasse 5, 1090 Vienna, Austria}
\email{filip.ficek@univie.ac.at}

\begin{abstract}
We present an elementary proof of existence of infinite family of time-periodic solutions to the one-dimensional nonlinear cubic wave equation with Dirichlet boundary conditions. It relies on the first order perturbative expansion and uses the Banach contraction principle to show existence of nearby solutions. In contrast to the previous results, this approach provides us explicit information about the frequencies and structures of the obtained solutions.
\end{abstract}

\thanks{We thank Maciej Maliborski, Roland Donninger, and Konrad Szyma\'{n}ski for valuable discussions. This work has been supported by the Austrian Science Fund (FWF) through Projects \href{http://doi.org/10.55776/P36455}{P 36455}, \href{http://doi.org/10.55776/PIN2161424}{PIN 2161424}, \href{http://doi.org/10.55776/PAT9429324}{PAT 9429324}, and Wittgenstein Award \href{http://doi.org/10.55776/Z387}{Z 387}.}

\maketitle

\tableofcontents

\section{Introduction}
Time-periodic solutions to the defocusing cubic wave equation with Dirichlet boundary conditions
\begin{align}\label{eq:nlw0}
\partial_t^2 u -\partial_x^2 u + u^3=0, \qquad u(t,0)=u(t,\pi)=0
\end{align}
have been an object of thorough investigations at least since the sixites \cite{keller1966periodic}, see \cite{wayne1997periodic} for an introduction to this topic containing a review of the relevant literature from the twentieth century. 
The current century brought an influx of new works obtained via different approaches, including averaging techniques \cite{Bambusi.2001}, Nash-Moser theorem \cite{Berti.2006, berti2007nonlinear}, variational principle \cite{berti2008cantor}, or Lindstedt series techniques \cite{GM.2004,GMP.2005}. 
They lead to the existence of small solutions with frequencies belonging to a positive measure Cantor set. The recent results suggest, however, that the global structure of the time-periodic solutions to \eqref{eq:nlw0} is much more complicated, containing also large-energy solutions with complex mode composition \cite{FFMM.Nonlinearity,FFMM.ATMP,FFMM.CAP,FFMM.stability}.

The goal of this article is to present a new proof of existence of small solutions to \eqref{eq:nlw0}. It is inspired by the computer assisted proofs \cite{Arioli.2017,FFMM.CAP}, but instead of relying on the numerical approximate solutions, it utilizes the perturbative expansion \cite{Khrustalev.2001, FFMM.Nonlinearity}. The main advantages of this approach are its simplicity (the main tool is the Banach contraction principle) and the explicit information about the frequency and structure of the solutions it provides.

Since \eqref{eq:nlw0} is autonomous, it is practical to introduce the rescaled time $\tau=\Omega t$, so the period becomes equal to $2\pi$ and the considered problem gets the following formulation 
\begin{align}\label{eq:nlw}
\Omega^2\partial_\tau^2 u -\partial_x^2 u+ u^3=0, \qquad u(\tau,0)=u(\tau,\pi)=0,\qquad u(\tau,x)=u(\tau+2\pi,x).
\end{align}
In \cite{FFMM.Nonlinearity} the Poincar\'{e}–Lindstedt method was employed to construct the perturbative expansion to this problem up to the fourth order. It has been proven there, that the lowest order of this expansion in a perturbative parameter $\varepsilon$ can be written as
\begin{align}\label{eq:pert}
u(\tau,x)=\varepsilon \, \sum_{n=0}^{\infty}f_n \sin(2n+1)\tau\,\sin(2n+1)x+\mathcal{O}\left(\varepsilon^3\right), \qquad \Omega=1+ \frac{\varepsilon^2}{256}+\mathcal{O}\left(\varepsilon^4\right),
\end{align}
where
\begin{align*}
f_n=\frac{q^{n+1/2}}{1+q^{2n+1}}
\end{align*}
and $q\approx0.014214$ is the unique solution to
\begin{align}\label{eq:q_equation}
2\left(\sum_{n=0}^\infty q^{(2n+1)^2/4}\right)^4-\left(\frac{1}{2}+\sum_{n=0}^\infty q^{n^2}\right)^4+3\sum_{n=0}^\infty\frac{q^{2n+1}}{(1+q^{2n+1})^2}=0.
\end{align}
Perturbative series for these types of problems are known to be in general divergent \cite{arnold1988dynamical}, except for nowhere dense families of the parameter \cite{GM.2004, GMP.2005}. However, they still can encode a lot of information regarding the structure of the solutions \cite{FFMM.PhysicaD}. 
In this work we use the first order of the perturbative expansion as a satisfactory approximation of the solution to the given problem.

The main obstacle in the study of time-periodic solutions to PDEs is the small-divisors problem. It is usually handled by focusing on solutions with frequencies satisfying certain diophantine conditions \cite{berti2007nonlinear}. Here we take another approach, that proved successful in \cite{Arioli.2017,FFMM.Nonlinearity}, and restrict ourselves to solutions with frequencies having the form $\Omega=\frac{2k+1}{2k}$, where $k\in\mathbb{N}_+$. It gives us a sufficient control on the inverse of the linear part of \eqref{eq:nlw}. The price we have to pay is that the set of the frequencies we cover is of measure zero, however, in contrast to the results employing the diophantine condition, it is explicit.

By restriciting to $\Omega=\frac{2k+1}{2k}$ we can parametrise the first order in \eqref{eq:pert} by $k\in\mathbb{N}_+$ so it becomes
\begin{align*}
    u_k(\tau,x)=\sqrt{\frac{128}{k}} \sum_{n=0}^{\infty}f_n \sin(2n+1)\tau\,\sin(2n+1)x.
\end{align*}
This function will play the role of an approximate solution to \eqref{eq:nlw} for a fixed $k$. We show that for sufficiently large values of $k$ there exists an exact solution to \eqref{eq:nlw} in a small neighbourhood of this approximate.

\begin{theorem}\label{thm:main}
For every $k\geq 79\,675$ there exists a non-trivial solution $u$ to \eqref{eq:nlw} with $\Omega=\frac{2k+1}{2k}$. This solution satisfies $\Vert u-u_k\Vert<\frac{139}{42\,500} k^{-1/2}$, with the norm $\Vert\cdot\Vert$ defined below.
\end{theorem}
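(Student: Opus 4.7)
The plan is to recast \eqref{eq:nlw} as a fixed-point equation for the error $v = u - u_k$ and apply the Banach contraction principle. Setting $L = \Omega^2 \partial_\tau^2 - \partial_x^2$ and $r = L u_k + u_k^3$ for the residual, the equation reads
\begin{align*}
Lv + 3u_k^2 v + 3u_k v^2 + v^3 = -r,
\end{align*}
which, once $L$ is inverted on the subspace of functions odd in both $\tau$ and $x$ supported on odd Fourier modes, becomes $v = T(v)$ with
\begin{align*}
T(v) = -L^{-1}\bigl(r + 3u_k^2 v + 3u_k v^2 + v^3\bigr).
\end{align*}
I would work in the Banach space $H$ of such functions equipped with an $\ell^1$-type norm on their Fourier coefficients: by Young's inequality for convolution this makes $H$ a Banach algebra, so all products are controlled by products of norms, while $L^{-1}$ is a Fourier multiplier whose operator norm reduces to a lower bound on its eigenvalues.

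The central quantitative ingredient is the small-divisor bound. On the mode $\sin((2m+1)\tau)\sin((2n+1)x)$, $L$ multiplies by $\lambda_{m,n} = (2n+1)^2 - \Omega^2(2m+1)^2$. With $\Omega = (2k+1)/(2k)$ one factors
\begin{align*}
(2k)^2 \lambda_{m,n} = \bigl[2k(2n+1) - (2k+1)(2m+1)\bigr]\bigl[2k(2n+1) + (2k+1)(2m+1)\bigr],
\end{align*}
where the first factor is an odd integer, therefore nonzero and of absolute value at least $1$. Hence
\begin{align*}
|\lambda_{m,n}| \geq \frac{2k(2n+1) + (2k+1)(2m+1)}{4k^2},
\end{align*}
so $L$ is invertible on $H$ with operator norm $\|L^{-1}\| \leq 4k^2/(4k+1) < k$, the worst case being attained at $m = n = 0$.

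The remaining inputs are size estimates on $u_k$ and $r$. Directly from the explicit form of $u_k$ and the rapid decay $f_n \sim q^{n+1/2}$ (with $q \approx 0.014$), one gets $\|u_k\| = O(k^{-1/2})$, hence $\|u_k^2\| = O(k^{-1})$ by the algebra property. For the residual, the coefficients $f_n$ and the value of $q$ are chosen, via \eqref{eq:q_equation}, so that the resonant components of $u_k^3$ along the diagonal modes $(2n+1,2n+1)$ — where the small eigenvalues of $L$ live — cancel the corresponding components of $Lu_k$ up to a higher-order correction, yielding $\|r\| = O(k^{-3/2})$. Composing with $L^{-1}$ gives $\|L^{-1}r\| = O(k^{-1/2})$, matching the target radius $\rho = Ck^{-1/2}$, while $\|L^{-1}(3u_k^2 v)\| = O(1)\,\|v\|$ and the quadratic and cubic pieces in $v$ contribute terms of order $k^{-1/2}\|v\|^2$ and $k\|v\|^3$, both controllable on the ball of radius $\rho$.

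The main obstacle will be pinning down numerical constants sharply enough for the contraction to close at precisely $k \geq 79\,675$ with the stated radius $\frac{139}{42\,500}k^{-1/2}$. The delicate point is the linear term: the worst-case product $3\|u_k\|^2 \cdot \|L^{-1}\|$ is only $O(1)$, and forcing it strictly below $1$ requires either exploiting the disjoint Fourier support of $u_k^2$ versus the diagonal modes where $\|L^{-1}\|$ peaks, or using a weighted bound tailored to the explicit decay of the $f_n$. The residual estimate is similarly delicate, as its constant prefactor directly fixes the radius. Once these quantitative bounds are established, the Banach fixed-point theorem delivers a unique $v \in H$ with $\|v\| < \frac{139}{42\,500}k^{-1/2}$, and non-triviality of $u = u_k + v$ follows from the fact that $\|u_k\| \sim k^{-1/2}$ strictly exceeds this bound in the stated range of $k$.
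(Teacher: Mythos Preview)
Your overall architecture matches the paper's: fixed-point formulation, Banach contraction in a weighted $\ell^1$ space, the parity trick $|2k(2n+1)-(2k+1)(2m+1)|\geq 1$ for the small-divisor bound, and $\|L^{-1}\|\lesssim k$. The size estimates $\|u_k\|=O(k^{-1/2})$, $\|u_k^3\|=O(k^{-3/2})$, and $\|L^{-1}u_k^3+u_k\|=O(k^{-3/2})$ are all correct and are what the paper proves.

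The genuine gap is the linear term, and it is not merely a matter of sharpening constants. Your map $T$ has derivative $DT(0)\colon v\mapsto -3L^{-1}(u_k^2 v)$, and this operator has norm strictly bounded \emph{away from and above} $1$, uniformly in $k$. Concretely, compute the $P_{0,0}$-component of $u_k^2 P_{0,0}$: using the trigonometric product rules (the paper's $c_{\mu,\nu}$ and the folding $P_{-1,0}=-P_{0,0}$, $P_{-1,-1}=P_{0,0}$, etc.), that coefficient equals $\tfrac{8}{k}\beta_0$ with $\beta_0=4\sum f_j^2+2\sum f_jf_{j+1}+5f_0^2\in(0.113,0.135)$. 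Since $L^{-1}P_{0,0}=-\tfrac{4k^2}{4k+1}P_{0,0}$, the $P_{0,0}$-coefficient of $-3L^{-1}(u_k^2 P_{0,0})$ is $\tfrac{96k}{4k+1}\beta_0\to 24\beta_0>2.7$. So $\|DT(0)\|>2.7$ for all large $k$, and $T$ is never a contraction. Neither of your proposed remedies helps: there is no disjointness of Fourier supports to exploit (the bad mode $P_{0,0}$ is exactly where $u_k^2 P_{0,0}$ and the peak of $\|L^{-1}\|$ overlap), and weights cannot suppress a problem that lives at the very lowest frequency.

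The paper's fix is to \emph{precondition}: write $u=u_k+Ah$ with $A$ a bounded isomorphism that, on the three-dimensional subspace $\mathrm{span}\{P_{0,0},P_{0,1},P_{1,0}\}$, approximates $(I+3L_k^{-1}\Lambda_k)^{-1}$ (here $\Lambda_k h=u_k^2 h$), and equals the identity elsewhere. The linear part of the new fixed-point map becomes $H_k=I-(I+3L_k^{-1}\Lambda_k)A$, and with this explicit finite-rank correction one obtains $\|H_k\|<0.88$. The factor $139/85$ in the theorem is exactly $\|A\|$, and the threshold $k\geq 79\,675$ emerges from balancing $\|\mathcal{N}_k(0)\|<8\sqrt{2}\,k^{-3/2}$ against the gap $1-\|H_k\|$ on a ball of radius $\tfrac{1}{500}k^{-1/2}$. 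Without an analogous preconditioning step, your argument cannot close.
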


\begin{remark}
We focus here on the solutions bifurcating from $\Omega=1$ and dominated by the mode $\sin\tau\,\sin x$. Other families can be obtained by simple recalings
\begin{align*}
    \tilde{u}(\tau,x)=n\,u(m\tau,nx),
\end{align*}
where $m,n\in\mathbb{N}_+$. The frequency of such rescaled solution $\tilde{u}$ is $n\Omega/m$, where $\Omega$ is the frequency of $u$.
\end{remark}

\begin{remark}
The constructed solutions $u$ to \eqref{eq:nlw0} have the property that by defining
\begin{align*}
    \tilde{u}(\tau,x)=\Omega^{-1}u(x,\tau)
\end{align*}
we get functions that satisfy
\begin{align*}
\Omega^{-2}\partial_\tau^2 \tilde{u} -\partial_x^2 \tilde{u}- \tilde{u}^3=0, \qquad \tilde{u}(\tau,0)=\tilde{u}(\tau,\pi)=0,\qquad \tilde{u}(\tau,x)=\tilde{u}(\tau+2\pi,x).
\end{align*}
Hence, Theorem \ref{thm:main} automatically gives existence of an infinite family of time-periodic solutions to the focusing cubic wave equation with frequencies having the form $\frac{2k}{2k+1}$.
\end{remark}

We present the proof of Theorem \ref{thm:main} in Section \ref{sec:proof}. It utilizes a few bounds on norms of various vectors and operators. Sections \ref{sec:preliminaries}-\ref{sec:boundH} are devoted to proofs of those bounds.

\section{Strategy of the proof}\label{sec:proof}
\subsection{Functional setting}
Let us define a family of basis functions $P_{m,n}$
\begin{equation}
    \label{eq:Pmn}
    P_{m,n}(\tau,x):=\sin (2m+1)\tau\,\sin(2n+1)x,
\end{equation}
where $m$ and $n$ are non-negative integers.
Finite linear combinations of functions $P_{m,n}$ constitute the vector space $\tilde{X}$. For any $v\in\tilde{X}$ we have a unique decomposition into
\begin{align*}
v=\sum_{m,n} \hat{v}_{m,n}P_{m,n},
\end{align*}
where the sum is finite. Then we equip $\tilde{X}$ with the weighted $l^1$-norm defined by
\begin{align*}
\Vert v\Vert=\sum_{m,n} \rho^{2(m+n+1)}  \left|\hat{v}_{m,n}\right|,
\end{align*}
where the weights are fixed to be $\rho=1+\frac{1}{1000}$. The completion of $\tilde{X}$ with respect to the norm $\Vert\cdot\Vert$ constitutes the Banach space $X$ in which we will be working. Such a construction of $X$ has the following advantages:
\begin{itemize}
    \item All elements of $X$ are $2\pi$-periodic in $\tau$ and are identically zero at $x=0$ and $x=\pi$. Thus, they automatically satisfy the boundary conditions imposed in \eqref{eq:nlw}.
    \item Thanks to the weights $\rho>1$, the elements of $X$ are analytic inside the strip \cite{arnold1988dynamical}
    \begin{align*}
        \left\{(\tau,x)\in\mathbb{C}^2:|\Im\tau|<\rho-1,|\Im x|<\rho-1\right\}.
    \end{align*}
    Hence, they are double differentiable in real variables.
    \item The $l^1$ norm lets us control the nonlinear term since, as we show in Lemma \ref{lem:triple_norm}, for any $u,v,w\in X$ their pointwise product $uvw$ belongs to $X$ with $\Vert uvw\Vert\leq \Vert u\Vert \Vert v\Vert \Vert w\Vert$.
    \item The $l^1$-norm allows for simple estimations of operator norms, since for any $\mathcal{H}\in\mathcal{L}(X)$ it holds
    \begin{align*}
        \Vert \mathcal{H}\Vert\leq \sup_{m,n}\frac{\left\Vert \mathcal{H}P_{m,n}\right\Vert}{\Vert P_{m,n}\Vert}.
    \end{align*}
\end{itemize}

We end this subsection by introducing additional notation and conventions that will be used throughout the paper. An open ball centered in $u\in X$ with radius $\delta\geq 0$ is denoted by $B_{\delta}(u)$. We define two subspaces of $X$
\begin{align*}
    Y_1=\spam\left\{P_{0,0},P_{0,1},P_{1,0}\right\},\qquad Y_2=\spam\{P_{m,n}:m\leq3\mbox{ and }n\leq3\},
\end{align*}
together with their respective complements $Z_1$ and $Z_2$, so that we have two different decompositions of $X$, namely $X=Y_1\oplus Z_1$ and $X=Y_2\oplus Z_2$.
Finally, certain formulas used below can be more conveniently expressed with the use of negative indices in $P_{m,n}$. Hence, we naturally extend the definition \eqref{eq:Pmn} so that
\begin{align}\label{eq:negative}
    P_{-m,n}=-P_{m-1,n}, \qquad P_{m,-n}=-P_{m,n-1}, \qquad P_{-m,-n}=P_{m-1,n-1}.
\end{align}

\subsection{Fixed-point formulation}
We rewrite the problem \eqref{eq:nlw} as a fixed-point problem. To do so, let us define the differential operator 
\begin{align*}
    L_k=\left(\frac{2k+1}{2k}\right)^2\partial_\tau^2-\partial_x^2.
\end{align*}
As we show in Observation \ref{obs:L-1norm}, it has a well-defined, bounded inverse $L_k^{-1}:X\to X$. Now we can decompose the sought solution as $u=u_k+Ah$, where $A:X\to X$ is a linear, continuous isomorphism defined in Section \ref{sec:A}. Then, \eqref{eq:nlw} can be written as a fixed-point problem $\mathcal{N}_k(h)=h$ for $h\in X$, where
\begin{align*}
    \mathcal{N}_k(h)=-L_k^{-1}\left(u_k+A h\right)^3-u_k+(I-A)h.
\end{align*}
Let us denote the linear part of $\mathcal{N}_k$ by
\begin{align*}
H_k(h)=-3L_k^{-1}(u_k^2\,Ah)+h-Ah.
\end{align*}
This formulation lets us give a simple proof of Theorem \ref{thm:main} relying on a few bounds that are derived in the remainder of the paper. Using these bounds we are able to show that for sufficiently large $k$ the nonlinear operator $\mathcal{N}_k$ is a contraction inside some sufficiently small neighborhood of zero, hence, it has a fixed point that gives rise to a solution of \eqref{eq:nlw} close to $u_k$. 

\begin{proof}[Proof of Theorem 1]
    Let us fix any $k\geq 79\,675$. We invoke the following bounds from the succeeding sections
\begin{align*}
    \Vert L_k^{-1}\Vert &< \frac{4k^2}{4k-1},&\quad \mbox{(Observation \ref{obs:L-1norm})},\\  
    \Vert u_k \Vert &<\frac{3}{2}k^{-1/2},&\quad \mbox{(Observation \ref{obs:bounduk})},\\
    \Vert \mathcal{N}_k(0)\Vert &< 8\sqrt{2} k^{-3/2},&\quad \mbox{(Lemma \ref{lem:boundN0})},\\
    \Vert A \Vert &<\frac{139}{85},&\quad \mbox{(Lemma \ref{lem:boundA})},\\
    \Vert H_k \Vert &<\frac{88}{100},&\quad \mbox{(Lemma \ref{lem:boundHk})}.
\end{align*}
Let us introduce $\delta_k=\frac{1}{500}k^{-1/2}$, so the bounds listed above yield
\begin{align*}
    \Vert H_k\Vert+6\left\Vert L_k^{-1}\right\Vert \Vert u_k\Vert \Vert A \Vert^2 \delta_k+3\left\Vert L_k^{-1}\right\Vert \Vert A \Vert^3 \delta_k^2 <\frac{88}{100}+\frac{7\,398\,339\,357}{38\,382\,812\,500}\frac{k}{4k-1}<\frac{929}{1000},
\end{align*}
where the last inequality uses the fact that $k$ is sufficiently large. Then, for any $h_1,h_2\in B_{\delta_k}(0)$ we get
\begin{align*}
\left\Vert\mathcal{N}_k(h_1)-\mathcal{N}_k(h_2)\right\Vert\leq&\left(\Vert H_k\Vert+6\left\Vert L_k^{-1}\right\Vert \Vert A\Vert^2 \Vert u_k\Vert\delta_k +3\left\Vert L_k^{-1}\right\Vert \Vert A\Vert^3\delta_k^2\right)\Vert h_1-h_2 \Vert<\frac{929}{1000}\Vert h_1-h_2 \Vert.
\end{align*}
This inequality also implies that for $h\in B_{\delta_k}$ it holds
\begin{align*}
\left\Vert\mathcal{N}_k(h)\right\Vert&\leq\left\Vert\mathcal{N}_k(h)-\mathcal{N}_k(0)\right\Vert+\left\Vert\mathcal{N}_k(0)\right\Vert  < \frac{929}{1000}\delta_k + 8\sqrt{2} k^{-3/2}=\left(\frac{929}{1000}+4000\sqrt{2}k^{-1}\right)\delta_k<\delta_k,
\end{align*}
because $k\geq79\,675$.
As a result, $\mathcal{N}_k:B_{\delta_k}\to B_{\delta_k}$ is a contraction. Hence, via the Banach contraction principle it has a unique fixed point $h\in B_{\delta_k}$. It leads to the solution $u=u_k+Ah$ of \eqref{eq:nlw} with $\Omega=\frac{2k+1}{2k}$. Additionally, we have
\begin{align*}
\Vert u_k-u\Vert<\Vert A\Vert\delta_k<\frac{139}{42\,500} k^{-1/2}.
\end{align*}
Since from Observation \ref{obs:bounduk} we have $\Vert u_k\Vert>\frac{5}{4}k^{-1/2}$, it holds
\begin{align*}
\Vert u\Vert>\Vert u_k\Vert-\Vert u_k-u\Vert>\left(\frac{5}{4}-\frac{139}{42\,500}\right) k^{-1/2}>0
\end{align*}
and the solution $u$ is indeed non-trivial.
\end{proof}

\section{Preliminaries}\label{sec:preliminaries}
In this section we collect some basic results that are used throughout the paper. For the convenience, we divide it into three short subsections.
\subsection[Space X]{Space $X$}
We begin by showing that $X$ is closed under the pointwise multiplication of its three elements. In particular, it means that for any $u\in X$, also $u^3\in X$.
\begin{lemma}\label{lem:triple_norm}
    Let $u,v,w\in X$, then
    \begin{align*}
        \Vert u\, v\, w\Vert\leq \Vert u\Vert\,\Vert v\Vert\,\Vert w \Vert.
    \end{align*}
\end{lemma}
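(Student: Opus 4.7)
The plan is to establish the inequality first on products of three basis functions $P_{m_i,n_i}$, and then extend it to arbitrary $u,v,w\in X$ by trilinearity together with absolute convergence in the weighted $l^1$ norm.

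First, I would use the triple-sine product-to-sum identity
\begin{align*}
4\sin A\,\sin B\,\sin C = \sin(A-B+C) + \sin(-A+B+C) + \sin(A+B-C) - \sin(A+B+C),
\end{align*}
applied separately to the $\tau$-factors and to the $x$-factors. This expresses $P_{m_1,n_1}P_{m_2,n_2}P_{m_3,n_3}$ as a signed sum of sixteen terms, each of the form $\pm\tfrac{1}{16}\sin(2M+1)\tau\,\sin(2N+1)x$, where $2M+1 = \epsilon_1(2m_1+1)+\epsilon_2(2m_2+1)+\epsilon_3(2m_3+1)$ and $2N+1$ is analogous in the $n_i$'s, with signs $\epsilon_i,\eta_i\in\{-1,+1\}$.

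Next I would use the convention \eqref{eq:negative} to rewrite each such term (up to an overall sign) as a genuine basis element $P_{M',N'}$ with $M',N'\geq 0$ satisfying $2M'+1=|2M+1|$ and $2N'+1=|2N+1|$. Since $|2M+1|\leq\sum_i(2m_i+1)$ and $|2N+1|\leq\sum_i(2n_i+1)$ by the triangle inequality on signed sums of positive integers, and because the weight factorises as $\rho^{2(m+n+1)}=\rho^{(2m+1)+(2n+1)}$, each of the sixteen terms satisfies
\begin{align*}
\Vert P_{M',N'}\Vert = \rho^{|2M+1|+|2N+1|} \leq \prod_{i=1}^{3}\rho^{(2m_i+1)+(2n_i+1)} = \prod_{i=1}^{3}\Vert P_{m_i,n_i}\Vert.
\end{align*}
Summing over the sixteen terms with the triangle inequality exactly cancels the $\tfrac{1}{16}$ prefactor, giving the basis-level bound $\Vert P_{m_1,n_1}P_{m_2,n_2}P_{m_3,n_3}\Vert\leq\prod_i\Vert P_{m_i,n_i}\Vert$.

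Finally, for general $u,v,w\in X$ I would expand them in the basis and write $uvw$ as the triple series $\sum\hat{u}_{m_1,n_1}\hat{v}_{m_2,n_2}\hat{w}_{m_3,n_3}\,P_{m_1,n_1}P_{m_2,n_2}P_{m_3,n_3}$; by the basis-level bound this converges absolutely in $X$, and the triangle inequality together with the $l^1$ structure factorises the total norm as exactly $\Vert u\Vert\,\Vert v\Vert\,\Vert w\Vert$. The only real obstacle is the bookkeeping of the sixteen signed terms and the careful passage through the convention \eqref{eq:negative}; there is no delicate analytic estimate beyond the triangle inequality and the multiplicative identity $\rho^{2(m+n+1)}=\rho^{(2m+1)+(2n+1)}$ on the weight.
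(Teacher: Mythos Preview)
Your proposal is correct and follows essentially the same approach as the paper: both expand the triple product of basis functions into sixteen signed terms via the product-to-sum identity, invoke the convention \eqref{eq:negative} for negative indices, bound each term's weight by $\rho^{2(m_1+m_2+m_3+n_1+n_2+n_3+3)}$, and then pass to general $u,v,w$ by trilinearity and the triangle inequality. The only cosmetic difference is that the paper first treats $u,v,w\in\tilde{X}$ and then invokes the completion, whereas you argue directly via absolute convergence of the triple series in $X$; these are equivalent formulations of the same density argument.
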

\begin{proof}
    The decomposition of a triple product of $P_{m,n}$ leads to 
    \begin{align}\label{eq:PPP}
        P_{m_1,n_1}\,P_{m_2,n_2}\,P_{m_3,n_3}=\frac{1}{16}\left[\right.& P_{m_1+m_2+m_3+1,n_1+n_2+n_3+1}-P_{-m_1+m_2+m_3,n_1+n_2+n_3+1}\\
        &-P_{m_1-m_2+m_3,n_1+n_2+n_3+1}- P_{m_1+m_2-m_3,n_1+n_2+n_3+1}\nonumber\\
        &-P_{m_1+m_2+m_3+1,-n_1+n_2+n_3}+P_{-m_1+m_2+m_3,-n_1+n_2+n_3}\nonumber\\
        &+P_{m_1-m_2+m_3,-n_1+n_2+n_3}+ P_{m_1+m_2-m_3,-n_1+n_2+n_3}\nonumber\\
        &-P_{m_1+m_2+m_3+1,n_1-n_2+n_3}+P_{-m_1+m_2+m_3,n_1-n_2+n_3}\nonumber\\
        &+P_{m_1-m_2+m_3,n_1-n_2+n_3}+ P_{m_1+m_2-m_3,n_1-n_2+n_3}\nonumber\\
        &-P_{m_1+m_2+m_3+1,n_1+n_2-n_3}+P_{-m_1+m_2+m_3,n_1+n_2-n_3}\nonumber\\
        &\left.+P_{m_1-m_2+m_3,n_1+n_2-n_3}+ P_{m_1+m_2-m_3,n_1+n_2-n_3}\right],\nonumber
    \end{align}
where potential negative indices are treated as in \eqref{eq:negative}. Thus, it holds
\begin{align*}
        \left\Vert P_{m_1,n_1}\,P_{m_2,n_2}\,P_{m_3,n_3}\right\Vert \leq \rho^{2(m_1+m_2+m_3+n_1+n_2+n_3+3)}.
\end{align*} 

Now assume that $u,v,w\in \tilde{X}$, so we can write them as
    \begin{align*}
        u=\sum_{m,n} \hat{u}_{m,n}P_{m,n},\qquad 
        v=\sum_{m,n} \hat{v}_{m,n}P_{m,n},\quad 
        w=\sum_{m,n} \hat{w}_{m,n}P_{m,n}.
    \end{align*}
Then
    \begin{align*}
        \Vert u\,v\,w\Vert&\leq 
        \sum_{m_1,n_1}
        \sum_{m_2,n_2} 
        \sum_{m_3,n_3} 
        \left|\hat{u}_{m_1,n_1} \hat{v}_{m_2,n_2} \hat{w}_{m_3,n_3}\right| \left\Vert P_{m_1,n_1}P_{m_2,n_2} P_{m_3,n_3}\right\Vert\\
        &\leq
        \sum_{m_1,n_1}
        \sum_{m_2,n_2} 
        \sum_{m_3,n_3}  \left|\hat{u}_{m_1,n_1} \hat{v}_{m_2,n_2} \hat{w}_{m_3,n_3}\right|\rho^{2(m_1+m_2+m_3+n_1+n_2+n_3+3)} \\
        &=\left(\sum_{m_1,n_1}\rho^{2(m_1+n_1+1)} \left|\hat{u}_{m_1,n_1}\right| \right)
        \left(\sum_{m_2,n_2} \rho^{2(m_2+n_2+1)}\left|\hat{v}_{m_2,n_2}\right| \right) \\
        &\quad \times\left(\sum_{m_3,n_3} \rho^{2(m_3+n_3+1)}\left|\hat{u}_{m_1,n_1}\right| \right)
        =\Vert u\Vert\,\Vert v\Vert\,\Vert w \Vert.
    \end{align*}
    As $X$ is the completion of $\tilde{X}$, this concludes the proof.
\end{proof}

\subsection[Operator L-1]{Operator $L_k^{-1}$}
The operator $L_k$ acts on the basis functions as
\begin{align*}
L_k P_{m,n}=\left[-\frac{(2k+1)^2}{4k^2}(2m+1)^2+(2n+1)^2\right]P_{m,n}.
\end{align*}
Since the expression inside the square brackets is non-zero for every $(m,n)\in\mathbb{N}^2$, we can formally define $L_k^{-1}$ as
\begin{align*}
L_k^{-1}P_{m,n}=\frac{4k^2}{-(2k+1)^2(2m+1)^2+4k^2(2n+1)^2}P_{m,n}.
\end{align*}
Now we prove a bound on $L^{-1}_k$ that implies continuity of this operator.
We begin with the following simple inequality
\begin{lemma}\label{lem:inequality}
Let $t$ and $s$ be positive numbers such that $|t-s|\geq 1$. Then the following inequality holds
\begin{equation}
    \label{eq:ineq}
    |t^2-s^2|\geq 2\max\{t,s\}-1.
\end{equation}
\end{lemma}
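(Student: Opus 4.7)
The plan is to reduce to a symmetric situation, factor, and compare. By the symmetry of the statement under swapping $t$ and $s$, I may assume without loss of generality that $t\geq s$, so that $\max\{t,s\}=t$ and the hypothesis reads $t-s\geq 1$. The inequality to prove then becomes
\begin{equation*}
t^2-s^2\geq 2t-1.
\end{equation*}

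Factoring the left-hand side as $(t-s)(t+s)$ and introducing the shorthand $a:=t-s\geq 1$, this rewrites as $a(2t-a)\geq 2t-1$. A direct algebraic manipulation gives the identity
\begin{equation*}
a(2t-a)-(2t-1)=(a-1)(2t-a-1)=(a-1)(t+s-1),
\end{equation*}
which reduces the problem to showing that both factors on the right are nonnegative. The first factor is nonnegative by the hypothesis $a\geq 1$. For the second, note that $t\geq s+1$ together with $s>0$ gives $t>1$, hence $t+s>1$, so $t+s-1>0$. Multiplying, the right-hand side is nonnegative and the inequality follows.

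There is no real obstacle here; the only subtlety is that the assumption of \emph{strict} positivity of $s$ (together with $|t-s|\geq 1$) is exactly what makes $t+s>1$, which is in turn what makes the factored expression nonnegative. Equality is attained precisely when $a=1$ or $t+s=1$, i.e.\ on the boundary of the hypotheses.
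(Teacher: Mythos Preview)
Your proof is correct and follows essentially the same approach as the paper: reduce by symmetry to the case $t>s$ and show $t^2-s^2\geq 2t-1$. The only difference is cosmetic---the paper argues by minimizing $f(s)=t^2-s^2$ over $s\in(0,t-1]$ (the minimum being at $s=t-1$), whereas you factor and use the identity $(a-1)(t+s-1)\geq 0$; both arguments are equally elementary. (A minor remark: under the strict hypothesis $s>0$ the case $t+s=1$ cannot actually occur, so equality is attained only when $a=1$.)
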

\begin{proof}
Assume that $s<t$ and consider a function $f(s)=t^2-s^2$ defined on an interval $s\in(0,t-1]$ for some fixed $t$. The function $f$ achieves its minimum at $s=t-1$, hence $(t^2-s^2)\geq 2t-1$. The case when $s>t$ is analogous, leading to \eqref{eq:ineq}.
\end{proof}
\noindent Now we are ready to prove
\begin{lemma}\label{lem:L-1ineq}
For any $(m,n)\in\mathbb{N}^2$ it holds
    \begin{align}\label{eq:L-1ineq}
        \left\Vert L^{-1} P_{m,n}\right\Vert\leq\frac{4k^2}{2\max\left\{2k(2n+1),(2k+1)(2m+1) \right\}-1}\left\Vert P_{m,n}\right\Vert
    \end{align}    
\end{lemma}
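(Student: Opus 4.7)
The plan is to reduce the inequality to a direct application of Lemma \ref{lem:inequality}. The key observation is that the denominator in the formula
\[
L_k^{-1}P_{m,n}=\frac{4k^2}{-(2k+1)^2(2m+1)^2+4k^2(2n+1)^2}P_{m,n}
\]
factors as a difference of squares. Setting $t=2k(2n+1)$ and $s=(2k+1)(2m+1)$, we have
\[
-(2k+1)^2(2m+1)^2+4k^2(2n+1)^2 = t^2-s^2,
\]
so the target bound becomes precisely the statement that $|t^2-s^2|\geq 2\max\{t,s\}-1$.

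First I would verify that Lemma \ref{lem:inequality} applies, i.e.\ that $|t-s|\geq 1$. Since $t$ and $s$ are both positive integers, it suffices to rule out equality. This follows from a parity check: $t=2k(2n+1)$ is even, whereas $s=(2k+1)(2m+1)$ is a product of two odd integers, hence odd. Therefore $t\neq s$, and since both are integers, $|t-s|\geq 1$.

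Next I would invoke Lemma \ref{lem:inequality} to conclude
\[
\bigl|t^2-s^2\bigr|\geq 2\max\{t,s\}-1 = 2\max\{2k(2n+1),(2k+1)(2m+1)\}-1.
\]
Taking reciprocals and multiplying by $4k^2$ yields
\[
\left|\frac{4k^2}{-(2k+1)^2(2m+1)^2+4k^2(2n+1)^2}\right|\leq\frac{4k^2}{2\max\{2k(2n+1),(2k+1)(2m+1)\}-1}.
\]
Since $L_k^{-1}P_{m,n}$ is a scalar multiple of $P_{m,n}$, the norm inequality \eqref{eq:L-1ineq} follows immediately from $\|\lambda P_{m,n}\|=|\lambda|\,\|P_{m,n}\|$.

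There is no real obstacle here; the only delicate point is recognising the parity argument that guarantees the hypothesis of Lemma \ref{lem:inequality}. Everything else is a mechanical rewriting of the explicit eigenvalue expression.
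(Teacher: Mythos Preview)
Your proof is correct and follows essentially the same approach as the paper: the parity argument showing $(2k+1)(2m+1)$ is odd while $2k(2n+1)$ is even, followed by a direct application of Lemma~\ref{lem:inequality} to the factored denominator. The only cosmetic difference is that you name the variables $t$ and $s$ explicitly, which arguably makes the invocation of Lemma~\ref{lem:inequality} cleaner.
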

\begin{proof}
Since $(2k+1)(2m+1)$ is odd and $2k(2n+1)$ is even, the difference between these two numbers is no smaller than one. Then from \eqref{eq:ineq} we get
\begin{align*}
    \left|-(2k+1)^2(2m+1)^2+4k^2(2n+1)^2\right|\geq 2\max\{2k(2n+1),(2k+1)(2m+1)\} - 1.
\end{align*}
As a result
\begin{align*}
\left\Vert L_k^{-1}(P_{m,n})\right\Vert&=\left|\frac{4k^2}{-(2k+1)^2(2m+1)^2+4k^2(2n+1)^2}\right|\left\Vert P_{m,n}\right\Vert\\
&\leq \frac{4k^2}{2\max\left\{2k(2n+1),(2k+1)(2m+1)\right\}-1}\left\Vert P_{m,n}\right\Vert.
\end{align*}
\end{proof}
\noindent This lemma leads to the following observations.
\begin{obs}\label{obs:L-1norm}
The inverse operator $L_k^{-1}$ is bounded by
\begin{align*}
    \left\Vert L_k^{-1}\right\Vert\leq \frac{4k^2}{4k-1}.
\end{align*}
\end{obs}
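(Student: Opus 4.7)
The plan is to obtain the bound as a direct corollary of Lemma \ref{lem:L-1ineq} via the operator-norm inequality
\begin{align*}
\Vert\mathcal{H}\Vert \leq \sup_{(m,n)\in\mathbb{N}^2} \frac{\Vert\mathcal{H} P_{m,n}\Vert}{\Vert P_{m,n}\Vert},
\end{align*}
valid for any $\mathcal{H}\in\mathcal{L}(X)$ (this was listed among the advantages of the weighted $l^1$-norm in Section \ref{sec:proof}). With this reduction, it suffices to take the supremum over $(m,n)$ of the per-mode bound furnished by Lemma \ref{lem:L-1ineq}.

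Next, I would minimize the denominator $2\max\{2k(2n+1),(2k+1)(2m+1)\}-1$ over $(m,n)\in\mathbb{N}^2$. Both arguments of the max are non-decreasing in their respective indices, so the minimum of the max is attained at $m=n=0$, where it equals $\max\{2k,2k+1\}=2k+1$. Plugging this in yields
\begin{align*}
\Vert L_k^{-1}\Vert \leq \frac{4k^2}{2(2k+1)-1} = \frac{4k^2}{4k+1} \leq \frac{4k^2}{4k-1},
\end{align*}
where the final inequality merely trades the sharp bound for the slightly looser form stated in the observation, which is presumably more convenient for the subsequent estimates in Section \ref{sec:proof}.

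Since the derivation is essentially one line once Lemma \ref{lem:L-1ineq} is in hand, there is no real obstacle here; the substantive work was already done in establishing Lemmas \ref{lem:inequality} and \ref{lem:L-1ineq}, and more fundamentally in the choice $\Omega=(2k+1)/(2k)$, which makes the small-divisor denominator $|(2k+1)^2(2m+1)^2-4k^2(2n+1)^2|$ bounded below by a quantity growing linearly in $\max\{m,n\}$ and of order $k$ at the worst mode $m=n=0$.
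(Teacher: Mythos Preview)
Your proof is correct and follows exactly the approach implicit in the paper, which presents the observation as an immediate consequence of Lemma~\ref{lem:L-1ineq} without further elaboration. You even sharpen the bound to $4k^2/(4k+1)$ before relaxing it to the stated form; this is indeed the exact operator norm, since $L_k^{-1}P_{0,0}=-\tfrac{4k^2}{4k+1}P_{0,0}$ saturates the inequality.
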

\begin{obs}\label{obs:L-1negative}
For any $(\mu,\nu)\in\mathbb{Z}^2$ it holds
    \begin{align}\label{eq:L-1negative}
        \left\Vert L^{-1} P_{\mu,\nu}\right\Vert\leq\frac{4k^2}{2\max\left\{2k|2\nu+1|,(2k+1)|2\mu+1| \right\}-1}\left\Vert P_{\mu,\nu}\right\Vert.
    \end{align} 
We can also get a simpler but weaker bound
\begin{align}\label{eq:L-1negative2}
     \left\Vert L^{-1} P_{\mu,\nu}\right\Vert\leq\frac{4k^2}{4k\max\left\{|2\mu+1|,|2\nu+1| \right\}-1}\left\Vert P_{\mu,\nu}\right\Vert.
\end{align}
\end{obs}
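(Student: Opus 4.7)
The statement to prove is Observation \ref{obs:L-1negative}, which extends the bound from Lemma \ref{lem:L-1ineq} from non-negative to arbitrary integer indices via the convention \eqref{eq:negative}. My plan is to reduce to the case already handled in Lemma \ref{lem:L-1ineq} by rewriting $P_{\mu,\nu}$ in terms of a basis function with non-negative indices, and then derive the weaker bound \eqref{eq:L-1negative2} as an immediate corollary.

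First I would observe that for every $(\mu,\nu)\in\mathbb{Z}^2$ the convention \eqref{eq:negative} lets us write $P_{\mu,\nu}=\sigma\,P_{m,n}$ for some sign $\sigma\in\{-1,+1\}$ and some $(m,n)\in\mathbb{N}^2$. A short case check by sign of $\mu$ (and symmetrically of $\nu$) yields the key relations $2m+1=|2\mu+1|$ and $2n+1=|2\nu+1|$: for $\mu\geq 0$ one simply takes $m=\mu$, while for $\mu<0$, writing $\mu=-m'$ with $m'\geq 1$, the rule $P_{-m',\cdot}=-P_{m'-1,\cdot}$ gives $m=m'-1$ with $2m+1=2m'-1=|2\mu+1|$. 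Since the norm $\Vert\cdot\Vert$ and the eigenvalue action of $L_k^{-1}$ both respect the sign $\sigma$, we have $\Vert L_k^{-1}P_{\mu,\nu}\Vert=\Vert L_k^{-1}P_{m,n}\Vert$ and $\Vert P_{\mu,\nu}\Vert=\Vert P_{m,n}\Vert$.

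Next I would apply Lemma \ref{lem:L-1ineq} to $P_{m,n}$ directly and substitute $2m+1=|2\mu+1|$, $2n+1=|2\nu+1|$, obtaining the desired bound \eqref{eq:L-1negative}. For the weaker inequality \eqref{eq:L-1negative2}, I would observe that $2k+1\geq 2k$ implies
\begin{align*}
\max\bigl\{2k|2\nu+1|,(2k+1)|2\mu+1|\bigr\}\ \geq\ 2k\max\bigl\{|2\mu+1|,|2\nu+1|\bigr\},
\end{align*}
so enlarging the denominator (noting it stays positive since $|2\mu+1|,|2\nu+1|\geq 1$ ensures $4k\max\{\cdot\}-1\geq 3$) only decreases the right-hand side.

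There is no substantive obstacle here: the whole content is bookkeeping the sign conventions of \eqref{eq:negative} carefully enough to confirm that the ``non-negative'' bound of Lemma \ref{lem:L-1ineq} transfers verbatim to $|2\mu+1|$ and $|2\nu+1|$. The only place one must pay attention is verifying $2m+1=|2\mu+1|$ in the negative-index case, which is the sole reason the absolute values appear in the final statement.
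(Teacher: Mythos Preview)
Your proposal is correct and matches the paper's approach: the paper states Observation~\ref{obs:L-1negative} without proof as an immediate consequence of Lemma~\ref{lem:L-1ineq}, and your argument is precisely the natural way to fill in those details---reducing via \eqref{eq:negative} to non-negative indices with $2m+1=|2\mu+1|$, $2n+1=|2\nu+1|$, and then weakening the denominator using $2k+1\geq 2k$. One small wording quibble: in your last step you write ``enlarging the denominator \ldots\ only decreases the right-hand side,'' but what you actually need (and what your displayed inequality correctly establishes) is that passing from \eqref{eq:L-1negative} to \eqref{eq:L-1negative2} \emph{shrinks} the denominator and hence \emph{enlarges} the bound, which is what makes \eqref{eq:L-1negative2} the weaker inequality.
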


\subsection[Function u]{Function $u_k$}
Here we derive rigorous bounds on the value of $q$ present in $f_n$.
\begin{lemma}\label{lem:q}
    For the solution to \eqref{eq:q_equation} it holds $q\in(\frac{13}{1000},\frac{15}{1000})$.
\end{lemma}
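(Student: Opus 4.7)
The plan is to apply the intermediate value theorem to the function $F(q)$ defined as the left-hand side of \eqref{eq:q_equation}. Since each of the three sums in $F$ converges uniformly on compact subsets of $(0,1)$, $F$ is continuous (indeed analytic) there, so producing rigorous rational bounds that show $F(13/1000)$ and $F(15/1000)$ have opposite signs is enough to localize a root to this interval. Uniqueness of that root is already asserted in the introduction.

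To evaluate $F$ rigorously at the two endpoints, each of the three sums $A(q) := \sum_{n \geq 0} q^{(2n+1)^2/4}$, $B(q) := \frac{1}{2} + \sum_{n \geq 0} q^{n^2}$, and $C(q) := \sum_{n \geq 0} q^{2n+1}/(1+q^{2n+1})^2$ has to be truncated with explicit tail control. All three tails decay extremely rapidly for $q \approx 0.014$: those of $A$ and $B$ decay super-geometrically because their exponents grow quadratically in $n$, while the tail of $C$ is trivially bounded by $\sum_{n\geq N} q^{2n+1} = q^{2N+1}/(1-q^2)$. Only a handful of terms from each sum are needed to approximate $A$, $B$ and $C$ to very high precision at the two points of interest.

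A small but essential observation is that although $A$ contains the fractional power $q^{1/4}$, the expansion of $A^4$ involves only integer powers of $q$, since a sum of four odd squares is always divisible by $4$. Hence the truncated approximation to $F$ is a polynomial in $q$ with rational coefficients and can be evaluated at $q = 13/1000$ and $q = 15/1000$ as an exact rational number, bracketed by a tiny interval coming from the truncation errors.

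The main effort lies in the bookkeeping: the truncation errors in $A$, $B$, $C$ get amplified when taking the fourth powers $A^4$ and $B^4$ and combining them into $2A^4 - B^4 + 3C$, and this amplified error has to stay strictly below the magnitude of the truncated value at each endpoint. Since the root lies between the two test points, $F$ is small there and the cancellation in $2A^4 - B^4 + 3C$ is near-exact, so the truncation order and the precision of the rational arithmetic have to be chosen with care. Once the two opposite-sign verifications succeed, the intermediate value theorem closes the argument.
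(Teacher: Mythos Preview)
Your strategy is correct and follows the same overall idea as the paper: verify opposite signs of the defining function at the two endpoints and invoke the intermediate value theorem, relying on the uniqueness established elsewhere. The implementation differs, however. You propose truncating each of the three series separately, carrying explicit tail bounds through the fourth powers, and evaluating the resulting rational approximations at $q=\tfrac{13}{1000}$ and $q=\tfrac{15}{1000}$. The paper instead constructs two crude closed-form envelope functions
\[
\underline{g}(x)\leq g(x)\leq \overline{g}(x),
\]
each a simple rational expression in $x$ (obtained by bounding the tails of the three sums by geometric series before taking fourth powers), and then checks $\overline{g}(\tfrac{13}{1000})<0$ and $\underline{g}(\tfrac{15}{1000})>0$ by direct evaluation. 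This sidesteps exactly the bookkeeping you flag as the main effort: there is no need to propagate truncation errors through $A^4$ and $B^4$, and no near-cancellation to worry about, because the envelopes are evaluated exactly. Your route would work and is in some sense more systematic, but the paper's is considerably lighter.
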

\begin{proof}
Let us define the following function on an interval $[0,1)$, see \eqref{eq:q_equation},
\begin{align*}
g(x)=2\left(\sum_{n=0}^\infty x^{(2n+1)^2/4}\right)^4-\left(\frac{1}{2}+\sum_{n=0}^\infty x^{n^2}\right)^4+3\sum_{n=0}^\infty\frac{x^{2n+1}}{(1+x^{2n+1})^2},
\end{align*}
together with auxiliary functions
\begin{align*}
\underline{g}(x)=&2x-\left(\frac{1}{2}+x+\frac{x^4}{1-x}\right)^4+\frac{3x}{(1+x)^2},\\
\overline{g}(x)=& \frac{2x}{(1-x^2)^4}-\left(\frac{1}{2}+x\right)^4+\frac{3x}{1-x^2}
\end{align*}
Then it holds $\underline{g}(x)\leq g(x)\leq\overline{g}(x)$. A simple evaluation shows that $\overline{g}(\frac{13}{1000})<0$ and $\underline{g}(\frac{15}{1000})>0$. Since from \cite{FFMM.Nonlinearity} we know that $g$ has a unique root, it must be inside the interval $(\frac{13}{1000},\frac{15}{1000})$.
\end{proof}
\noindent This result lets us get explicit lower and upper bounds on various expressions encountered throughout the paper. As an example, we get the following estimates for $\Vert u_k \Vert$.
\begin{obs}\label{obs:bounduk}
    Since $0<q<1$ and $f_n<q^{n+1/2}$, we have $u_k \in X$. Additionally, Lemma \ref{lem:q} gives us
\begin{align*}
    \left\Vert u_k\right\Vert <\sqrt{\frac{128}{k}}\frac{\rho^2\sqrt{q}}{1-\rho^4 q}<\frac{3}{2\sqrt{k}}
\end{align*}
and
\begin{align*}
    \left\Vert u_k\right\Vert >\sqrt{\frac{128}{k}}\frac{\rho^2\sqrt{q}}{1+q}>\frac{5}{4\sqrt{k}}.
\end{align*}
\end{obs}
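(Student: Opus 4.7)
The starting point is to write $u_k$ explicitly in the basis $\{P_{m,n}\}$. Since $u_k(\tau,x)=\sqrt{128/k}\sum_{n=0}^\infty f_n\sin(2n+1)\tau\sin(2n+1)x$, the only nonzero coefficients are diagonal: $\hat u_{n,n}=\sqrt{128/k}\,f_n$. Therefore the formal norm of $u_k$ reduces to a single series
\begin{align*}
\|u_k\|=\sqrt{\tfrac{128}{k}}\sum_{n=0}^\infty \rho^{2(2n+1)}f_n=\sqrt{\tfrac{128}{k}}\,\rho^2\sum_{n=0}^\infty (\rho^4)^n\,\frac{q^{n+1/2}}{1+q^{2n+1}}.
\end{align*}
Both bounds in the statement then come from elementary manipulations of this single series; as an immediate byproduct, once the upper bound is shown to be finite, the partial sums of the defining series for $u_k$ form a Cauchy sequence in the norm, giving $u_k\in X$.

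For the upper bound I would drop the denominator: since $q>0$, $1+q^{2n+1}>1$, hence $f_n<q^{n+1/2}$ and the series is dominated by a geometric one with ratio $\rho^4 q$,
\begin{align*}
\|u_k\|<\sqrt{\tfrac{128}{k}}\,\rho^2\sqrt{q}\sum_{n=0}^\infty (\rho^4 q)^n=\sqrt{\tfrac{128}{k}}\,\frac{\rho^2\sqrt q}{1-\rho^4 q}.
\end{align*}
Convergence requires $\rho^4 q<1$; this is automatic from Lemma \ref{lem:q}, since $\rho^4<(1+10^{-3})^4$ and $q<15/1000$ so the product is well under $1$. The numerical bound $3/(2\sqrt k)$ is then obtained by inserting $q<15/1000$ and $\rho=1+1/1000$ into $\sqrt{128}\,\rho^2\sqrt q/(1-\rho^4 q)$ and checking that the resulting constant is less than $3/2$.

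For the lower bound the estimation is even simpler: every term of the series is positive, so retaining only the $n=0$ term gives
\begin{align*}
\|u_k\|\geq\sqrt{\tfrac{128}{k}}\,\rho^2 f_0=\sqrt{\tfrac{128}{k}}\,\frac{\rho^2\sqrt q}{1+q}.
\end{align*}
Again Lemma \ref{lem:q} supplies the numerics: using $q>13/1000$ and $q<15/1000$ in the denominator, together with $\rho^2>1$, one checks that $\sqrt{128}\,\rho^2\sqrt q/(1+q)>5/4$.

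None of the steps pose a real obstacle; the whole argument is a diagonal-coefficient computation followed by two elementary estimates of a positive series. The only point that requires a moment's care is verifying the two final numerical inequalities using the explicit window $(13/1000,15/1000)$ for $q$ from Lemma \ref{lem:q}, and making sure the choice $\rho=1+1/1000$ is small enough that $\rho^4 q$ stays comfortably below $1$ so the geometric sum in the upper bound is valid.
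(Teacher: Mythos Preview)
Your proposal is correct and follows exactly the approach the paper has in mind; the observation in the paper is stated without a separate proof precisely because the argument is the straightforward one you give---bound $f_n<q^{n+1/2}$ and sum the geometric series for the upper bound, and keep the $n=0$ term for the lower bound. One cosmetic point: your lower bound should be written as a strict inequality (as in the statement), which is immediate since the discarded terms $n\geq 1$ are strictly positive.
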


\section[Bound on the residue N(0)]{Bound on the residue $\mathcal{N}_k(0)$}
In this section we derive the bound on the norm of
\begin{align*}
    \mathcal{N}_k(0)=-L_k^{-1}u_k^3-u_k.
\end{align*}
We start by investigating the cubic power of $u_k$. Formulas derived in \cite{FFMM.Nonlinearity} let one write it explicitly as
\begin{align*}
u_k^3 (\tau,x)=\frac{1024\sqrt{2}}{k^{3/2}}\sum_{n=0}^{\infty} \sum_{m=0}^{\infty} b_{m,n} \,P_{m,n}(\tau,x),
\end{align*}
where
\begin{align*}
b_{m,n}=\begin{cases}
\displaystyle\frac{(2m+1)^2}{128}f_m \quad & \mbox{ for $m=n$},\\[3ex]
\displaystyle-\frac{3}{64}\frac{m-n}{\sinh\left[\left(m+\frac{1}{2}\right)\ln q\right] - \sinh\left[\left(n+\frac{1}{2}\right)\ln q\right]} & \mbox{ for $m\neq n$ and $m-n$ even},\\[3ex]
\displaystyle\frac{3}{64}\frac{m+n+1}{\sinh\left[\left(m+\frac{1}{2}\right)\ln q\right] + \sinh\left[\left(n+\frac{1}{2}\right)\ln q\right]} & \mbox{ for $m\neq n$ and $m-n$ odd}.
\end{cases}
\end{align*}
This expression leads to the following bound on $\left\Vert u_k^3\right\Vert$.
\begin{lemma}\label{lem:bounduk3}
    It holds $\left\Vert u_k^3\right\Vert<2\sqrt{2}k^{-3/2}$.
\end{lemma}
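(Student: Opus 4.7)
The target bound is sharper than what the triple-norm inequality gives: Lemma \ref{lem:triple_norm} combined with Observation \ref{obs:bounduk} only yields $\Vert u_k^3\Vert\leq \Vert u_k\Vert^3<\frac{27}{8}k^{-3/2}$, which exceeds the desired $2\sqrt{2}\,k^{-3/2}$. So the explicit Fourier expansion of $u_k^3$ displayed just before the lemma must be used, and we must show
\begin{align*}
\Sigma:=\sum_{m,n=0}^{\infty}|b_{m,n}|\,\rho^{2(m+n+1)}<\frac{1}{512},
\end{align*}
after which the bound on $\Vert u_k^3\Vert$ follows from the triangle inequality applied to the explicit series.

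The plan is to split $\Sigma$ into three pieces matching the three cases in the definition of $b_{m,n}$: the diagonal contribution $\Sigma_0$ ($m=n$), the off-diagonal even contribution $\Sigma_e$ ($m\neq n$, $m-n$ even), and the odd one $\Sigma_o$ ($m\neq n$, $m-n$ odd). For $\Sigma_0$ the bound $f_n<q^{n+1/2}$ yields $|b_{n,n}|\leq\frac{(2n+1)^2}{128}q^{n+1/2}$ and the resulting sum $\frac{\rho^2\sqrt{q}}{128}\sum_{n\geq0}(2n+1)^2(\rho^4 q)^n$ is a standard power series that can be evaluated in closed form. For the off-diagonal pieces, write $s_j:=\sinh[(j+\tfrac12)\ln q]=\tfrac12(q^{j+1/2}-q^{-j-1/2})<0$ and observe that, assuming without loss of generality $m>n$ (the general case follows by the $m\leftrightarrow n$ symmetry of $b_{m,n}$, costing a factor $2$),
\begin{align*}
|s_m-s_n|\geq\tfrac12\bigl(q^{-m-1/2}-q^{-n-1/2}\bigr)=\tfrac12 q^{-m-1/2}(1-q^{m-n})\geq\tfrac12 q^{-m-1/2}(1-q^2),
\end{align*}
(the last step uses $m-n\geq2$ in the even case), while $|s_m+s_n|\geq|s_m|\geq\tfrac12 q^{-m-1/2}(1-q)$. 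These inequalities give
\begin{align*}
|b_{m,n}|\leq\frac{3(m-n)\,q^{m+1/2}}{32(1-q^2)}\quad\text{(even)},\qquad |b_{m,n}|\leq\frac{3(m+n+1)\,q^{m+1/2}}{32(1-q)}\quad\text{(odd)},
\end{align*}
so that each of $\Sigma_e,\Sigma_o$ reduces to a double geometric-polynomial sum which I can carry out by first performing the inner sum over $n\in\{0,\dots,m-1\}$ explicitly and then summing the geometric-polynomial series in $m$ against the weight $q^{m+1/2}\rho^{2m+2}$.

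Finally, with all three contributions estimated in closed form as rational functions of $q$ and $\rho$, I substitute $\rho=1+\tfrac{1}{1000}$ and use the rigorous enclosure $q\in\bigl(\tfrac{13}{1000},\tfrac{15}{1000}\bigr)$ from Lemma \ref{lem:q}, picking the sign in each rational expression that produces an upper bound. The resulting numerical inequality $\Sigma<\tfrac{1}{512}$ should hold with comfortable slack because $q\rho^{2}$ is tiny (around $0.015$), making the series essentially dominated by its leading terms. The main obstacle is purely bookkeeping: keeping the polynomial factors $(2n+1)^2$, $m-n$ and $m+n+1$ coupled with the correct powers of $\rho$ and $q$, and making sure that the upper bounds chosen for $q$ (or lower ones, depending on where it appears) are consistent throughout, so that the final arithmetic delivers a rigorous strict inequality rather than a borderline one.
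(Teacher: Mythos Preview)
Your proposal is correct and follows essentially the same approach as the paper: split $\Sigma$ into diagonal and off-diagonal parts, bound the off-diagonal $|b_{m,n}|$ by estimating the $\sinh$ combinations from below via the dominant $q^{-m-1/2}$ term, and then sum the resulting geometric--polynomial series and evaluate numerically using $q\in(\tfrac{13}{1000},\tfrac{15}{1000})$. The only minor differences are the constants in the $\sinh$ lower bounds (you use $1-q^2$ and $1-q$ where the paper uses $1-2q$ and $1$) and that the paper merges the even and odd off-diagonal cases into the single common bound $|b_{m,n}|<\tfrac{3}{32}q^{m+1/2}\tfrac{m+n+1}{1-2q}$ before summing, which slightly simplifies the final arithmetic.
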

\begin{proof}
Let us consider
\begin{align*}
    I:= \sum_{m=0}^\infty  \sum_{n=0}^\infty \left|b_{m,n}\right|\rho^{2(m+n+1)}.
\end{align*}
We separate it into two parts, diagonal ($m=n$) and nondiagonal ($m\neq n$). The former satisfies
\begin{align*}
    \sum_{m=0}^\infty \left|b_{m,m}\right|\rho^{4m+2}&=\sum_{m=0}^\infty\frac{(2m+1)^2}{128}f_m\rho^{4m+2}< \sum_{m=0}^\infty\frac{(2m+1)^2}{128}q^{m+1/2}\rho^{4m+2}\\
    &=\frac{\sqrt{\rho^4 q}}{128}\frac{1+6\rho^4 q+\rho^8 q^2}{(1-\rho^4 q)^3}<\frac{11}{10\,000}.
\end{align*}

To bound the non-diagonal terms we assume that $m>n$ and consider two cases. First, let $m-n$ be odd, so
\begin{align*}
    \sinh\left[\left(m+\frac{1}{2}\right)\ln q\right] + \sinh\left[\left(n+\frac{1}{2}\right)\ln q\right]=-\frac{1}{2q^{m+1/2}}\left(1+q^{m-n}-q^{m+n+1}-q^{2m+1}\right).
\end{align*}
Since $1-q^{2n+1}-q^{m+n+1}>1-2q>0$, the expression inside the brackets is larger than one. Thus, we have
\begin{align}\label{eq:u3_proof_A}
    |b_{m,n}|<\frac{3}{32} q^{m+1/2}(m+n+1).
\end{align}
If $m-n$ is even, we need to consider
\begin{align*}
    \sinh\left[\left(m+\frac{1}{2}\right)\ln q\right] - \sinh\left[\left(n+\frac{1}{2}\right)\ln q\right]=-\frac{1}{2q^{m+1/2}}\left(1-q^{m-n}+q^{m+n+1}-q^{2m+1}\right).
\end{align*}
This time we can easily bound the bracket to be larger than $1-2q$, getting
\begin{align}\label{eq:u3_proof_B}
    |b_{m,n}|<\frac{3}{32} q^{m+1/2}\frac{m-n}{1-2q}.
\end{align}
We combine \eqref{eq:u3_proof_A} and \eqref{eq:u3_proof_B} into a common bound for $m>n$
\begin{align*}
    |b_{m,n}|<\frac{3}{32} q^{m+1/2}\frac{m+n+1}{1-2q}.
\end{align*}
An analogous bound holds for $m<n$, thus, we can write
\begin{align*}
    \sum_{m\neq n} |b_{m,n}|\rho^{2(m+n+1)}&<2\sum_{m=1}^{\infty}\sum_{n=0}^{m-1}\frac{3}{32} q^{m+1/2}\frac{m+n+1}{1-2q} \rho^{2(m+n+1)}\\
    &=\frac{3\rho^4q^{3/2}(2-\rho^2 q-\rho^6 q)}{16(1-2q)(1-\rho^2 q)^2(1-\rho^4 q)^2}<\frac{8}{10\,000},
\end{align*}
where the sums has been computed using standard summation identities
\begin{align*}
\sum_{n=0}^{m-1} p^n = \frac{1-p^m}{1-p},\qquad \sum_{n=0}^{m-1} n\,p^n = \frac{p\,(1-p^m)}{(1-p)^2} - \frac{m\,p^m}{1-p}.
\end{align*}
Combining this result with the bound on the sum of the diagonal terms we get $I<\frac{19}{10\,000}<\frac{1}{512}$. This leads to $\Vert u_k^3\Vert<2\sqrt{2}k^{-3/2}$
\end{proof}

We also prove the following lemma.
\begin{lemma}\label{lem:bound_help_N0}
For any $k\in\mathbb{N}_+$ and $(m,n)\in\mathbb{N}^2$ such that $m<n$ it holds 
\begin{align}\label{eq:N0_proof_inequality}
\left|\frac{k^2}{16(m-n)(m+n+1)k^2+(2m+1)^2 (4k+1)}\right|\leq 1.
\end{align}
\end{lemma}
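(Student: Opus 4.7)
The plan is to rewrite the denominator as a difference of two squares and then exploit integrality. A direct expansion shows
\begin{align*}
16(m-n)(m+n+1)k^2+(2m+1)^2(4k+1) = (2k+1)^2(2m+1)^2 - (2k)^2(2n+1)^2,
\end{align*}
which (up to the factor $-4k^2$) is precisely the eigenvalue of $L_k$ on $P_{m,n}$. Factoring as a difference of squares yields a product $\alpha\beta$ with
\begin{align*}
\alpha := (2k+1)(2m+1)-2k(2n+1) = 4k(m-n)+(2m+1),\qquad \beta := (2k+1)(2m+1)+2k(2n+1) = 4k(m+n+1)+(2m+1),
\end{align*}
reducing the task to showing $|\alpha\beta|\geq k^2$. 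Since $4k(m-n)$ is even and $2m+1$ is odd, $\alpha$ is a nonzero odd integer, so $|\alpha|\geq 1$; and since $n>m\geq 0$ forces $m+n+1\geq 2$, we have $\beta\geq 8k+1>0$.

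These baseline bounds only give $|\alpha\beta|\geq 8k+1$, which is too weak for large $k$, so I would run a dichotomy on the size of $|\alpha|$. In the non-resonant regime $|\alpha|\geq k$ the inequality $|\alpha\beta|\geq k(8k+1) > k^2$ is immediate. In the near-resonant regime $|\alpha|\leq k-1$ (using that $\alpha$ is a nonzero integer), I would invoke the identity $2n+1 = 2(n-m)(2k+1) + \alpha$ (a one-line check from the definition of $\alpha$) combined with $n-m\geq 1$ to conclude $2n+1\geq 2(2k+1)-(k-1) = 3k+3$. Feeding this into $\beta = 4k(2n+1)+\alpha$ gives $\beta\geq 4k(3k+3)-(k-1) = 12k^2+11k+1$, so $|\alpha\beta|\geq \beta > k^2$ as required.

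The only non-routine ingredient is spotting the factorization $\alpha\beta$; once it is in hand, the proof is a two-line case split. The main obstacle I anticipate is purely bookkeeping: one must use that $\alpha$ is a nonzero integer (so $|\alpha|<k$ actually means $|\alpha|\leq k-1$) to close the near-resonant case, since otherwise the estimate $2n+1\geq 3k+3$ — and with it the crucial lower bound on $\beta$ of order $k^2$ — would not survive.
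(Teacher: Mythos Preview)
Your proof is correct and takes a cleaner route than the paper's. Both arguments rest on the same algebraic identity
\[
16(m-n)(m+n+1)k^2+(2m+1)^2(4k+1)=(2k+1)^2(2m+1)^2-4k^2(2n+1)^2,
\]
but the paper treats the resulting quotient as a function of the real variable $x=k$, locates its pole $x_0=(2m+1)/(4(n-m))$, observes monotonicity on either side, reduces to the two integers $\lfloor x_0\rfloor$ and $\lceil x_0\rceil$, and then invokes the spectral bound \eqref{eq:L-1ineq} together with a further case analysis in $(m,l)$ with $l=n-m$ to finish. Your approach instead factors the denominator as $\alpha\beta$ and runs a single dichotomy on $|\alpha|$: in the resonant case $|\alpha|\leq k-1$ the identity $2n+1=2(n-m)(2k+1)+\alpha$ forces $2n+1\geq 3k+3$, hence $\beta\geq 12k^2+11k+1$, and the bound follows from $|\alpha|\geq 1$. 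This is more elementary (no calculus, no appeal to \eqref{eq:L-1ineq}), shorter, and makes transparent the role of the odd parity of $\alpha$ in defeating the small divisors.
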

\begin{proof}
Let us consider 
\begin{align*}
    g(x)=\frac{x^2}{-\alpha\, x^2+\beta\,(4x+1)},
\end{align*}
where $\alpha,\beta>0$ and $x\geq 0$. It has a pole at $x_0=2\beta(1+\sqrt{1+\alpha/2\beta})/\alpha$ and is increasing in the intervals $(0,x_0)$, $(x_0,\infty)$. In addition, its values are positive in the former and negative in the latter. For any $m, n\in\mathbb{N}$ such that $m<n$ we can put $\alpha=16(n-m)(m+n+1)$, $\beta=(2m+1)^2$, so that $|g(k)|$ becomes the left hand side of \eqref{eq:N0_proof_inequality}. Then $x_0=(2m+1)/(4(n-m))$. We are interested in the maximal value of $|g(k)|$ for $k\in\mathbb{N}_+$. From the considerations above, it follows that $\max_{k\in\mathbb{N}}|g(k)|=\max(|g(\lfloor x_0\rfloor)|,|g(\lceil x_0\rceil)|)$. We can use now \eqref{eq:L-1ineq} to bound
\begin{align*}
    |g(k)|=\left|\frac{k^2}{-(2k+1)^2(2m+1)^2+4k^2(2n+1)^2}\right|\leq\frac{k^2}{2\max\{2k(2n+1),(2k+1)(2m+1)\}-1}.
\end{align*}

Since $\lceil x_0\rceil\in[x_0,x_0+1)$, we get
\begin{align*}
    |g(\lceil x_0\rceil)|\leq\frac{\left(x_0+1\right)^2}{2\max\{2x_0(2n+1),(2x_0+1)(2m+1)\}-1}
    =\frac{(1-2m+4n)^2}{16(n-m)(1+3m+n+4mn)}.
\end{align*}
Since $m<n$, we can introduce $l=n-m$ and define
\begin{align*}
\overline{g}(m,l)=\frac{(1+2m+4l)^2}{16l[(4m+1)l+(2m+1)^2]}.
\end{align*}
For any fixed $l\geq 1$, $\overline{g}$ is a decreasing function in $m$. Hence, we can look for its largest value on the line $m=0$. For $l\geq 1$ the function $\overline{g}(0,l)$ is increasing. Since $\lim_{l\to\infty}\overline{g}(0,l)=1$, we get a bound $\overline{g}(m,n)\leq 1$ leading to $|g(\lceil x_0\rceil)|\leq1$.

The other argument $\lfloor x_0\rfloor$ requires consideration of two subcases. When $x_0<1$, then $g(\lfloor x_0\rfloor)=g(0)=0$. Hence, let us assume $x_0>1$, which is equivalent to $1+6m-4n>0$, so $\lfloor x_0\rfloor\in(x_0-1,x_0]$. Similarly as before, we then have
\begin{align*}
    |g(\lfloor x_0\rfloor)|&\leq\frac{x_0^2}{2\max(2(x_0-1)(2n+1),(2x_0-1)(2m+1))-1}
    \\
    &=\frac{(2m+1)^2}{16(n-m)[(2m+1)(1+6m-4n)+(1+4m)(n-m)]}.
\end{align*}
Let us again introduce $l=n-m$ and define
\begin{align*}
\underline{g}(m,l)=\frac{(2m+1)^2}{16l[(2m+1)^2-(4m+3)l]}.
\end{align*}
We are interested in the values of this function inside the region $l\geq1$, $m>(1+4l)/2$. The second condition is equivalent to $x_0>1$ and ensures positivity of $\underline{g}$. For fixed $l$, $\underline{g}$ is a decreasing function in $m$, so the maximal value on that line can be bounded by $\underline{g}((1+4l)/2,l)$. This expression is decreasing as a function of $l$, leading us to the bound $\underline{g}(5/2,1)=9/92$. As a result, we get $|g(\lfloor x_0\rfloor)|\leq9/92$. Together with the bound on $|g(\lceil x_0\rceil)|$ it gives \eqref{eq:N0_proof_inequality}.

\end{proof}

Now we are ready to derive the estimate for $\Vert\mathcal{N}_k(0)\Vert$.
\begin{lemma}\label{lem:boundN0}
It holds $\Vert \mathcal{N}_k(0)\Vert \leq 8\sqrt{2}k^{-3/2}$.
\end{lemma}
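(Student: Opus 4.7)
The naive combination of $\Vert L_k^{-1}\Vert \lesssim k$ (Observation \ref{obs:L-1norm}) with $\Vert u_k^3\Vert \lesssim k^{-3/2}$ (Lemma \ref{lem:bounduk3}) yields only an $O(k^{-1/2})$ bound on $\Vert L_k^{-1} u_k^3\Vert$, which is far too weak. The true estimate is driven by the fact that $u_k$ was constructed as a perturbative solution, so the diagonal ($m=n$) part of $-L_k^{-1} u_k^3$ should almost exactly cancel $u_k$, while on the off-diagonal part one must carefully track the action of $L_k^{-1}$, especially in the regime $m<n$ where small divisors threaten.

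Accordingly, I would split $\mathcal{N}_k(0) = -L_k^{-1} u_k^3 - u_k$ into diagonal and off-diagonal contributions using the decomposition of $u_k^3$ stated just before Lemma \ref{lem:bounduk3}. For the diagonal part, the identity $L_k P_{n,n} = -\frac{(4k+1)(2n+1)^2}{4k^2}P_{n,n}$ produces a telescoping cancellation of $u_k$ with $-L_k^{-1}\bigl(u_k^3|_{\mathrm{diag}}\bigr)$, leaving the residue
\[
\mathcal{N}_k(0)\big|_{\mathrm{diag}} = -\frac{8\sqrt{2}}{\sqrt{k}\,(4k+1)} \sum_{n=0}^{\infty} f_n P_{n,n}.
\]
Its norm is then controlled by a convergent geometric series in $\rho^4 q$, using $f_n \leq q^{n+1/2}$ and the explicit range for $q$ from Lemma \ref{lem:q}; this yields a contribution of order $k^{-5/2}$, which is negligible against the target.

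For the off-diagonal part I would split $m\neq n$ into the two sub-ranges $m>n$ and $m<n$. In the first range, $4k^2(2n+1)^2-(2k+1)^2(2m+1)^2$ and $-16k^2(m-n)(m+n+1)-(4k+1)(2m+1)^2$ coincide, so its absolute value is bounded below by $16k^2(m-n)(m+n+1)\geq 16k^2$, giving the elementary bound $\Vert L_k^{-1} P_{m,n}\Vert \leq \tfrac{1}{4}\Vert P_{m,n}\Vert$. In the second range, the two contributions in that denominator have opposite signs and can nearly cancel — this is the small-divisor regime — and Lemma \ref{lem:bound_help_N0} provides exactly the needed bound, namely $\Vert L_k^{-1} P_{m,n}\Vert \leq 4\Vert P_{m,n}\Vert$. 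Combining these termwise with the prefactor $1024\sqrt{2}/k^{3/2}$ and invoking the estimate $\sum_{m\neq n}|b_{m,n}|\rho^{2(m+n+1)}<8/10\,000$ already derived within the proof of Lemma \ref{lem:bounduk3} gives an off-diagonal contribution of only a few multiples of $\sqrt{2}\,k^{-3/2}$. Adding both pieces leaves comfortable room under the claimed $8\sqrt{2}\,k^{-3/2}$.

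The principal obstacle is precisely the off-diagonal sub-range $m<n$: here the small divisors of $L_k^{-1}$ prevent any naive operator-norm argument, and the entire estimate hinges on the arithmetic miracle encoded in Lemma \ref{lem:bound_help_N0}, which shows that the worst-case divisor remains proportional to $k^2$ uniformly in $m,n$.
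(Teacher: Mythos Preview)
Your approach is essentially identical to the paper's: the same diagonal/off-diagonal split, the same elementary lower bound $16k^2(m-n)(m+n+1)$ on the denominator for $m>n$, the same appeal to Lemma~\ref{lem:bound_help_N0} for the small-divisor range $m<n$, and the same final summation against the estimates on $\sum|b_{m,n}|\rho^{2(m+n+1)}$ from Lemma~\ref{lem:bounduk3}. The only blemish is that your diagonal residue $-\tfrac{8\sqrt{2}}{\sqrt{k}(4k+1)}\sum f_n P_{n,n}$ is of order $k^{-3/2}$, not $k^{-5/2}$ as you wrote; the constant is small enough that this does not affect the final bound.
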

\begin{proof}
Let us fix $k\in\mathbb{N}_+$ and denote the coefficients in the decomposition of $ \mathcal{N}_k(0)$ in the $P_{m,n}$ basis by $\hat{N}_{m,n}$, so 
that we have
\begin{align*}
    \mathcal{N}_k(0)=\sum_{m=0}^\infty \sum_{n=0}^\infty \hat{N}_{m,n} P_{m,n}.
\end{align*}
Again we analyse diagonal ($m=n$) and non-diagonal ($m\neq n$) modes separately. For the former it holds
\begin{align*}
\left|\hat{N}_{m,m}\right|&=\left|-\frac{4k^2}{(4k^2-(2k+1)^2)(2m+1)^2} \frac{1024\sqrt{2}}{k\sqrt{k}}\,b_{m,m}-\frac{16}{\sqrt{2k}}f_m\right|=\frac{1024\sqrt{2}}{(4k+1)\sqrt{k}(2m+1)^2}\,\left|b_{m,m}\right|\\
&\leq 256\sqrt{2}\,k^{-3/2}\,\left|b_{m,m}\right|,
\end{align*}
while for the latter
\begin{align*}
\left|\hat{N}_{m,n}\right|&=\left|\frac{4k^2}{-(2k+1)^2(2m+1)^2+4k^2(2n+1)^2 } \frac{1024\sqrt{2}}{k\sqrt{k}}\, b_{m,n}\right|\\
&=\left|\frac{4096 \sqrt{2k}}{16(m-n)(m+n+1)k^2+(2m+1)^2 (4k+1)}\, b_{m,n}\right|.
\end{align*}
Now we consider two cases. When $m>n$, we get
\begin{align*}
\left|\hat{N}_{m,n}\right|&\leq\frac{4096 \sqrt{2k}}{16(m-n)(m+n+1)k^2} \left|b_{m,n}\right|
\leq256\sqrt{2}k^{-3/2}\,\left|b_{m,n}\right|.
\end{align*}
In the opposite case we can use \eqref{eq:N0_proof_inequality} to get $\left|\hat{N}_{m,n}\right| \leq 4096\sqrt{2}k^{-3/2} \left|b_{m,n}\right|$. This concludes the proof, as now
\begin{align*}
\Vert \mathcal{N}_k(0) \Vert=\sum_{m,n}\rho^{2(m+n+1)}\left|\hat{N}_{m,n}\right|\leq \sum_{m,n}\rho^{2(m+n+1)} 4096\sqrt{2}k^{-3/2} \left|b_{m,n}\right|=4 \left\Vert u_k^3\right\Vert
\end{align*}
and we can use Lemma \ref{lem:bounduk3} to get $\Vert \mathcal{N}_k(0)\Vert \leq 8\sqrt{2}k^{-3/2}$.
\end{proof}

\section[Operator A]{Operator $A$}\label{sec:A}
In general, there is no reason to expect that $\mathcal{N}_k$ is a local contraction, but one can assure it by choosing an appropriate form of the operator $A$. If $H_k$, the linear part of $\mathcal{N}_k$, satisfies $\Vert H_k\Vert<1$, then it is possible to find sufficiently small $\delta_k$ that $\mathcal{N}_k$ indeed is a contraction for $h_1,h_2\in B_{\delta_k}(0)$ (the question whether the image of this ball lies inside it is a separate matter). The operator $H_k$ can be written as
\begin{align*}
    H_k=I-\left( I+ 3 L_k^{-1} \circ \Lambda_k\right)\circ A,
\end{align*}
where we have introduced $\Lambda_k: X\to X$ defined as $\Lambda_k h=u_k^2 h$ (Lemma \ref{lem:triple_norm} ensures it is a well-defined, bounded operator). It leads us to the conclusion that $(I+3 L_k^{-1}\circ \Lambda_k)^{-1}$ would be the best choice for $A$. However, it is not feasible to calculate explicitly the form of such operator. Instead, we approximate it in a simple way described below. Let us stress on the fact that our choice of $A$ has been made with the line of proof of Lemma \ref{lem:boundHk} in mind and is in no way unique. 

Let us introduce
\begin{align*}
    \beta_0=& 4\sum_{j=0}^\infty f_j^2+2\sum_{j=0}^\infty f_j f_{j+1}+5f_0^2,\qquad
    \beta_1 =2\sum_{j=0}^\infty f_j f_{j+1}+3 f_0^2.
\end{align*}
\begin{obs}\label{obs:beta}
Using Lemma \ref{lem:q} we can establish
\begin{align*}
\frac{113}{1000}<\frac{1}{(1+q)^2}\left(4\frac{q}{1-q^2}+2\frac{q^2}{1-q^2}+5q\right) <\beta_0< 4\frac{q}{1-q^2}+2\frac{q^2}{1-q^2}+5\frac{q}{(1+q)^2}<\frac{135}{1000},
\end{align*}
\begin{align*}
\frac{38}{1000}<\frac{1}{(1+q)^2}\left(2\frac{q^2}{1-q^2}+3q\right) <\beta_1< \frac{2q^2}{1-q^2}+\frac{3q}{(1+q)^2}<\frac{45}{1000}.
\end{align*}
\end{obs}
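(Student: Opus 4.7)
The plan is to squeeze each factor $f_n = q^{n+1/2}/(1+q^{2n+1})$ between elementary expressions in $q$, sum the resulting geometric series in closed form, and then invoke the range $q\in(13/1000,15/1000)$ from Lemma \ref{lem:q} to close out the numerical inequalities.

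The starting point is the pointwise bound
$$\frac{q^{n+1/2}}{1+q} \leq f_n \leq q^{n+1/2}, \qquad n\geq 0,$$
which follows from $0 < q^{2n+1} \leq q$, together with the exact identity $f_0^2 = q/(1+q)^2$. Squaring, multiplying pairwise, and summing via $\sum_{j\geq 0} q^{2j+1} = q/(1-q^2)$ and $\sum_{j\geq 0} q^{2j+2} = q^2/(1-q^2)$ yields
$$\frac{q}{(1+q)^2(1-q^2)} \leq \sum_{j=0}^\infty f_j^2 \leq \frac{q}{1-q^2}, \qquad \frac{q^2}{(1+q)^2(1-q^2)} \leq \sum_{j=0}^\infty f_j f_{j+1} \leq \frac{q^2}{1-q^2}.$$
Inserting these together with the exact value of $f_0^2$ into the definitions of $\beta_0$ and $\beta_1$ directly produces the outer bounds stated in the observation. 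For the lower bounds, factoring $(1+q)^{-2}$ out of every summand rewrites the $5f_0^2$ (respectively $3f_0^2$) contribution as $5q/(1+q)^2$ (respectively $3q/(1+q)^2$), so that the $5q$ (respectively $3q$) appears inside the parenthesis, exactly matching the displayed inner expressions.

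It remains to verify the numerical estimates. Each of $q/(1-q^2)$, $q^2/(1-q^2)$, $q/(1+q)^2$, and $q$ is strictly increasing on $(0,1)$, so the upper-bound expressions are monotonically increasing in $q$, and a direct evaluation at $q = 15/1000$ is enough to obtain strict bounds below $135/1000$ and $45/1000$ respectively. The lower-bound expressions factor as $(1+q)^{-2}$ times an increasing function of $q$; since $(1+q)^{-2} > (1+15/1000)^{-2}$ for $q < 15/1000$, and since the bracketed factor is bounded below by its value at $q = 13/1000$, taking the product gives numerical lower bounds exceeding $113/1000$ and $38/1000$ respectively. The whole argument is essentially algebraic, with the only delicate point being that the margin on the $\beta_0$ lower bound is quite tight; I would carry out that arithmetic explicitly to confirm the strict inequality.
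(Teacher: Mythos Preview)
Your proposal is correct and follows exactly the approach implicit in the paper's displayed intermediate expressions: the pointwise squeeze $q^{n+1/2}/(1+q)\leq f_n\leq q^{n+1/2}$ together with the exact value $f_0^2=q/(1+q)^2$ and geometric summation reproduces precisely the inner bounds stated in the observation, and your monotonicity argument for the numerical endpoints is the natural way to close. The tight margin you flag on the $\beta_0$ lower bound is real (your mixed-endpoint product comes out near $0.1139$), but it does clear $113/1000$; in fact the full expression is increasing on the interval, so evaluating directly at $q=13/1000$ gives the slightly better value $\approx 0.1144$ if you prefer a cleaner verification.
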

\noindent These estimates allows us to define a linear operator $\mathcal{A}:Y_1\to Y_1$ that in the basis $(P_{0,0},P_{0,1},P_{1,0})$ is represented by the matrix
\begin{align*}
    \left[\begin{array}{ccc}-\frac{1}{24 \beta_0-1}&\frac{24\beta_1}{24 \beta_0-1}&\frac{24\beta_1}{24 \beta_0-1}\\
    0&1&0\\
    0&0&1\end{array}\right].
\end{align*}
Now let us denote by $\Pi$ a projection on the subspace $Y_1$. Then we define the linear operator $A:X\to X$ as
\begin{align*}
    A=\mathcal{A}\,\Pi+(I-\Pi).
\end{align*}
Its norm can be bounded from above in the following way.
\begin{lemma}\label{lem:boundA}
    It holds $\Vert A\Vert<\frac{139}{85}$.
\end{lemma}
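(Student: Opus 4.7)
The plan is to invoke the operator norm bound
\[
\Vert A\Vert \leq \sup_{m,n} \frac{\Vert A P_{m,n}\Vert}{\Vert P_{m,n}\Vert}
\]
stated in the functional setting, which is perfectly suited here because $A$ acts as the identity outside the three-dimensional subspace $Y_1$. Concretely, since $A = \mathcal{A}\Pi + (I-\Pi)$, for every $(m,n)\notin\{(0,0),(0,1),(1,0)\}$ we have $AP_{m,n}=P_{m,n}$, so the ratio equals one. Hence the supremum reduces to examining three basis functions (or it equals one).

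For those three I would read off the action of $A$ directly from the given matrix of $\mathcal{A}$: $AP_{0,0}$ is a scalar multiple of $P_{0,0}$ alone, while $AP_{0,1}$ and $AP_{1,0}$ each decompose as $P_{0,1}$, respectively $P_{1,0}$, plus a $P_{0,0}$-component with coefficient $24\beta_1/(24\beta_0-1)$. Using $\Vert P_{m,n}\Vert = \rho^{2(m+n+1)}$ and the positivity of both constants $24\beta_0 - 1$ and $24\beta_1$ guaranteed by Observation \ref{obs:beta}, the three ratios become
\[
\frac{\Vert AP_{0,0}\Vert}{\Vert P_{0,0}\Vert} = \frac{1}{24\beta_0-1}, \qquad \frac{\Vert AP_{0,1}\Vert}{\Vert P_{0,1}\Vert} = \frac{\Vert AP_{1,0}\Vert}{\Vert P_{1,0}\Vert} = \frac{24\beta_1}{(24\beta_0-1)\rho^2}+1.
\]
The lower bound $\beta_0>113/1000$ yields $24\beta_0-1>1$, so the first ratio lies below $1$; the second ratio, conversely, lies above $1$. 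The supremum is therefore equal to the second ratio.

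To finish, I would substitute the upper bound $\beta_1<45/1000$ and the lower bound $\beta_0>113/1000$, and discard the favourable factor $\rho^2>1$ in the denominator, obtaining
\[
\Vert A\Vert < \frac{24\cdot 45/1000}{24\cdot 113/1000 - 1}+1 = \frac{1080}{1712}+1 = \frac{349}{214},
\]
and then check the numerical inequality $349/214 < 139/85$, which reduces to $349\cdot 85 = 29665 < 29746 = 139\cdot 214$. There is no conceptual obstacle: every step is an explicit computation, and the only delicate point is that the target constant $139/85$ must leave enough margin for the conservative estimates (in particular ignoring the helpful factor of $\rho^2$), which it does.
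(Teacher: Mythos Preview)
Your proposal is correct and follows essentially the same approach as the paper: both use the $\ell^1$ operator-norm bound $\Vert A\Vert\le\sup_{m,n}\Vert AP_{m,n}\Vert/\Vert P_{m,n}\Vert$, observe that $A$ is the identity outside $Y_1$, and estimate the three remaining ratios via Observation~\ref{obs:beta}. The only cosmetic difference is that the paper bounds $\Vert AP_{0,1}\Vert$ by $\frac{54}{85}\Vert P_{0,0}\Vert+\Vert P_{0,1}\Vert$ and then uses $\Vert P_{0,0}\Vert<\Vert P_{0,1}\Vert$, whereas you divide through by $\Vert P_{0,1}\Vert$ first and discard the resulting $\rho^{-2}$ factor---these are the same estimate.
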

\begin{proof}
Using Observation \ref{obs:beta} we get
\begin{align*}
\left\Vert AP_{0,0}\right\Vert=&\left|\frac{1}{24\beta_0-1}\right|\left\Vert P_{0,0}\right\Vert <\frac{10}{17}\left\Vert P_{0,0}\right\Vert,\\
\left\Vert AP_{0,1}\right\Vert=&\left|\frac{24\beta_1}{24\beta_0-1}\right|\left\Vert P_{0,0}\right\Vert + \left\Vert P_{0,1}\right\Vert <\frac{54}{85}\left\Vert P_{0,0}\right\Vert+\left\Vert P_{0,1}\right\Vert<\frac{139}{85}\left\Vert P_{0,1}\right\Vert,\\
\left\Vert AP_{1,0}\right\Vert=&\left|\frac{24\beta_1}{24\beta_0-1}\right|\left\Vert P_{0,0}\right\Vert + \left\Vert P_{1,0}\right\Vert <\frac{54}{85}\left\Vert P_{0,0}\right\Vert+\left\Vert P_{1,0}\right\Vert<\frac{139}{85}\left\Vert P_{1,0}\right\Vert.
\end{align*}
For the remaining $P_{m,n}$ the operator $A$ acts as an identity, hence, we get $\Vert A\Vert<\frac{139}{85}$.
\end{proof}

\section[Bound on the operator H]{Bound on the operator $H_k$}\label{sec:boundH}
Finally, we estimate the value of $\Vert H_k\Vert$. We start by the following result of simple but tedious calculations.

\begin{lemma}
For any $(m,n)\in\mathbb{N}^2$ one can decompose $u_k^2 P_{m,n}$ as
\begin{equation}\label{eq:u2Pmn}
    u_k^2 P_{m,n}=\sum_{\mu=-\infty}^\infty \sum_{\nu=-\infty}^\infty c_{\mu,\nu}P_{\mu+m,\nu+n},
\end{equation}
where
\begin{align*}
c_{\mu,\nu}=\frac{128}{k}
\begin{cases}
    \frac14 \sum_{j=0}^\infty f_j^2 & \mbox{for $\mu=\nu=0$,}\\
    \frac{1}{16}\left(\sum_{j=0}^\infty 2f_j f_{j+|\mu|} +\sum_{j=0}^{|\mu|-1} f_j f_{|\mu|-(j+1)} \right) & \mbox{for $|\mu|=|\nu|\neq 0$,}\\
    -\frac{1}{8} f^2_{\frac{|\mu|-1}{2}} & \mbox{for $\nu=0$ and $\mu\neq0$,}\\
    -\frac{1}{8} f^2_{\frac{|\nu|-1}{2}} & \mbox{for $\mu=0$ and $\nu\neq0$,}\\
    -\frac{1}{8} f_{\frac{|\mu|+|\nu|-1}{2}} f_{\frac{||\mu|-|\nu||-1}{2}} & \mbox{for $|\mu|,|\nu|\geq 1$, $|\mu|\neq |\nu|$, and $\mu-\nu$ odd,}\\
    0 & \mbox{for $|\mu|,|\nu|\geq 1$, $|\mu|\neq |\nu|$, and $\mu-\nu$ even.}\\
 \end{cases}
\end{align*}
\end{lemma}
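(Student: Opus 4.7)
The plan is a direct computation based on the triple product formula \eqref{eq:PPP}. Writing
\begin{align*}
u_k^2\, P_{m,n} = \frac{128}{k}\sum_{i,j \ge 0} f_i\, f_j\, P_{i,i}\, P_{j,j}\, P_{m,n}
\end{align*}
and applying \eqref{eq:PPP} with $(m_1, n_1) = (i, i)$, $(m_2, n_2) = (j, j)$, $(m_3, n_3) = (m, n)$, each summand becomes an alternating sum of sixteen basis functions. The key point is that in each of these sixteen terms the indices have the form $(\varepsilon_1 m + \mu_0,\, \varepsilon_2 n + \nu_0)$ with $\varepsilon_1, \varepsilon_2 \in \{\pm 1\}$ and $\mu_0, \nu_0$ determined by $(i, j)$ and the choice of term; twelve of the terms carry $\varepsilon_1 = +1$ and four carry $\varepsilon_1 = -1$, and analogously for $\varepsilon_2$.

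To put every term uniformly in the form $P_{\mu + m, \nu + n}$ we use the identity $P_{i+j-m,\,\cdot} = -P_{(-i-j-1)+m,\,\cdot}$, which is an immediate consequence of the sign convention \eqref{eq:negative}, together with its analogue in the second slot. This converts each $\varepsilon_1 = -1$ term into a signed $\varepsilon_1 = +1$ term with $\mu_0 = -i-j-1$, and likewise for $\varepsilon_2$. After this rewriting $\mu_0$ and $\nu_0$ each range over the set $\{i+j+1,\, i-j,\, j-i,\, -i-j-1\}$, and interchanging the order of summation yields
\begin{align*}
u_k^2\, P_{m,n} = \sum_{\mu, \nu \in \mathbb{Z}} c_{\mu, \nu}\, P_{\mu + m, \nu + n},\qquad c_{\mu, \nu} = \frac{8}{k}\sum_{i,j \ge 0} f_i\, f_j\, \sigma_{\mu, \nu}(i, j),
\end{align*}
where $\sigma_{\mu, \nu}(i, j)$ is the signed number of the sixteen terms that land at the slot $(\mu, \nu)$ for the given $(i, j)$.

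What remains is to evaluate these sums in each region of the $(\mu, \nu)$-plane. For fixed $(\mu, \nu)$ one solves the two linear equations $\mu_0 = \mu$ and $\nu_0 = \nu$ imposed by each of the sixteen choices of term, and the six cases of the lemma correspond to the geometrically distinct ways the pair can be simultaneously solvable. On the diagonal $|\mu| = |\nu|$ the $(j-i)$- and $(i-j)$-type pairings admit an infinite one-parameter family of solutions producing the series $\sum_j f_j f_{j+|\mu|}$, while the $(i+j+1)$- and $(-i-j-1)$-type pairings collide finitely to produce $\sum_{j=0}^{|\mu|-1} f_j f_{|\mu|-(j+1)}$; on the axes $\mu = 0$ or $\nu = 0$ one must match a sum-type with a difference-type equation, which admits a single parity-compatible solution only when the non-zero index is odd, yielding the $-\tfrac{1}{8} f_{(|\mu|-1)/2}^2$ contribution; and for the generic off-diagonal $|\mu|, |\nu| \ge 1$ with $|\mu| \ne |\nu|$, integrality of the unique solution forces $\mu - \nu$ odd (giving $-\tfrac{1}{8} f_{(|\mu|+|\nu|-1)/2}\, f_{(||\mu|-|\nu||-1)/2}$), whereas $\mu - \nu$ even leaves the system infeasible and $c_{\mu, \nu} = 0$. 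The principal obstacle is purely organisational: keeping the sixteen signed contributions straight and consistently applying \eqref{eq:negative}. We would tabulate the triple (sign, $\mu_0$, $\nu_0$) for each of the sixteen summands at the outset, and then read off $c_{\mu, \nu}$ in each case by intersecting this table with the target $(\mu, \nu)$.
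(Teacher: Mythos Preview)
Your approach is correct and is essentially the same as the paper's: both expand $u_k^2 P_{m,n}$ via the triple product formula \eqref{eq:PPP} and then collect terms. The only cosmetic difference is that the paper first rewrites $u_k$ as a sum over all $\lambda\in\mathbb{Z}$ via $\tilde f_\lambda$ (absorbing the sign convention \eqref{eq:negative} into the coefficients), which slightly reduces the bookkeeping you describe, but the computation is the same in substance.
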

\begin{proof}
We can write $u_k$ as 
\begin{align*}
    u_k=\sqrt{\frac{128}{k}}\sum_{l=0}^\infty f_l P_{l,l}=\sum_{\lambda=-\infty}^\infty \tilde{f}_{\lambda}P_{\lambda,\lambda},\quad\mbox{ where } \tilde{f}_\lambda = \sqrt{\frac{32}{k}}\begin{cases}
        f_{\lambda} &\mbox{for $\lambda\geq 0$}\\
        f_{-\lambda-1}& \mbox{for $\lambda< 0$}
    \end{cases}
\end{align*}
Then by decomposing $u_k^2P_{m,n}$ as
\begin{align*}
    u_k^2P_{m,n}=\sum_{\lambda=-\infty}^\infty \sum_{\kappa=-\infty}^\infty \tilde{f}_\lambda \tilde{f}_\kappa P_{\lambda,\lambda} P_{\kappa,\kappa} P_{m,n},
\end{align*}
using \eqref{eq:PPP} to write it as a linear combination of the basis functions, and collecting terms with the same indices in $P_{\mu,\nu}$ we obtain \eqref{eq:u2Pmn}.
\end{proof}

The coefficients $c_{\mu,\nu}$ can be bounded in the following way.
\begin{lemma}
    For $(\mu,\nu)\in\mathbb{Z}^2\setminus\{(0,0),(0,\pm1),(\pm1,0)\}$ it holds
    \begin{align}\label{eq:cest}
    \left|c_{\mu,\nu}\right|<\frac{1}{k}\left(8\max\{|\mu|,|\nu|\}+16\frac{q}{1-q^2}\right)q^{\max\{|\mu|,|\nu|\}}.
    \end{align}
\end{lemma}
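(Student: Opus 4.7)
The plan is to bound each of the six cases in the piecewise definition of $c_{\mu,\nu}$ separately, leveraging the elementary inequality $f_j<q^{j+1/2}$, which follows immediately from $f_j=q^{j+1/2}/(1+q^{2j+1})$. The diagonal case $|\mu|=|\nu|\neq 0$ will carry the main content of the bound, while the other nonzero cases will only need the cruder estimate $|c_{\mu,\nu}|\leq 16\,q^{\max\{|\mu|,|\nu|\}}/k$, which fits under the stated envelope precisely because the excluded indices in the hypothesis are exactly those where $\max\{|\mu|,|\nu|\}=1$.

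The first step is the case $|\mu|=|\nu|\neq 0$. Applying $f_j<q^{j+1/2}$ to the tail yields the geometric bound $\sum_{j=0}^\infty f_j f_{j+|\mu|}<q^{|\mu|+1}/(1-q^2)$, while the finite convolution sum satisfies
\[
\sum_{j=0}^{|\mu|-1} f_j f_{|\mu|-(j+1)} < \sum_{j=0}^{|\mu|-1} q^{j+1/2}\,q^{|\mu|-j-1/2} = |\mu|\,q^{|\mu|}.
\]
Combining these with the prefactor $\tfrac{128}{16k}=\tfrac{8}{k}$ produces exactly
\[
|c_{\mu,\nu}| < \frac{1}{k}\left(8|\mu|+\frac{16q}{1-q^2}\right)q^{|\mu|},
\]
which is the target bound, since $\max\{|\mu|,|\nu|\}=|\mu|$ here. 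Note that this estimate remains sharp enough at $|\mu|=|\nu|=1$ precisely because the $|\mu|q^{|\mu|}$ contribution matches the $8\max\{|\mu|,|\nu|\}\,q^{\max\{|\mu|,|\nu|\}}$ term in the claim.

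The second step handles the remaining non-zero cases. For $\nu=0$, $\mu\neq 0$ (and the symmetric case) the estimate $f_j<q^{j+1/2}$ directly gives $|c_{\mu,0}|<\tfrac{16}{k}\,q^{|\mu|}$. For the off-diagonal case with $|\mu|,|\nu|\geq 1$, $|\mu|\neq|\nu|$, and $\mu-\nu$ odd, the same elementary bound yields
\[
f_{(|\mu|+|\nu|-1)/2}\,f_{(||\mu|-|\nu||-1)/2} < q^{(|\mu|+|\nu|+||\mu|-|\nu||)/2} = q^{\max\{|\mu|,|\nu|\}},
\]
so once again $|c_{\mu,\nu}|<\tfrac{16}{k}\,q^{\max\{|\mu|,|\nu|\}}$. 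To conclude, I need $16<8\max\{|\mu|,|\nu|\}+16q/(1-q^2)$, which is implied by $\max\{|\mu|,|\nu|\}\geq 2$. This is precisely where the exclusions in the hypothesis matter: the points $(\pm 1,0),(0,\pm 1)$ are exactly those with $\max=1$ for the axis cases, while in the off-diagonal case the constraint $|\mu|\neq|\nu|$ together with $|\mu|,|\nu|\geq 1$ automatically forces $\max\{|\mu|,|\nu|\}\geq 2$. The final case ($|\mu|,|\nu|\geq 1$, $|\mu|\neq|\nu|$, $\mu-\nu$ even) is trivial since $c_{\mu,\nu}=0$.

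There is no genuine obstacle; the proof is a careful case bookkeeping. The only point requiring attention is verifying that the excluded indices in the lemma statement coincide exactly with the regime in which the crude bound $16\,q^{\max}/k$ would fail to fit beneath $(8\max+16q/(1-q^2))q^{\max}/k$, which is what makes the hypothesis both necessary and sufficient in its stated form.
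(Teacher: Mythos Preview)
Your proof is correct and follows essentially the same approach as the paper: a case-by-case analysis using $f_j<q^{j+1/2}$, handling the diagonal case $|\mu|=|\nu|$ by splitting into the geometric tail and the finite convolution sum, and reducing the remaining nonzero cases to the crude bound $\tfrac{16}{k}q^{\max\{|\mu|,|\nu|\}}$, which fits under the target precisely when $\max\{|\mu|,|\nu|\}\geq 2$. Your discussion of why the excluded indices are exactly those where this last step would fail is slightly more explicit than the paper's, but the argument is the same.
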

\begin{proof}
Let us define
\begin{align*}
    g(n)=\frac{1}{k}\left(8n+16\frac{q}{1-q^2}\right)q^n.
\end{align*}
The result can be simply obtained with a case by case analysis using the estimation $f_n<q^{n+1/2}$. For $|\mu|=|\nu|$ we have
\begin{align*}
    \left|c_{\mu,\nu}\right|=\left|c_{|\mu|,|\nu|}\right|<\frac{8}{k}\left(\frac{2q^{|\mu|+1}}{1-q^2}+|\mu|\,q^{|\mu|}\right)=g(|\mu|).
\end{align*}
For $\nu=0$ and $|\mu|\geq 2$ there is
\begin{align*}
    \left|c_{\mu,\nu}\right|=\left|c_{|\mu|,0}\right|<\frac{16}{k}q^{|\mu|}\leq\frac{8|\mu|}{k}q^{|\mu|}<g(|\mu|).
\end{align*}
Analogous results holds in the opposite case, when $\mu=0$ and $|\nu|\geq 2$. Finally, for $|\mu|\neq|\nu|$, both larger than zero and with $\mu-\nu$ odd, at least one of the numbers $|\mu|$ and $|\nu|$ is no smaller than two, so
\begin{align*}
    \left|c_{\mu,\nu}\right|=\left|c_{|\mu|,|\nu|}\right|<\frac{16}{k}q^{\max\{|\mu|,|\nu|\}}\leq\frac{8}{k}\max\{|\mu|,|\nu|\}q^{|\mu|}<g\left(\max\{|\mu|,|\nu|\}\right).
\end{align*}
All these results can be gathered to give $\left|c_{|\mu|,|\nu|}\right|<g(\max\{|\mu|,|\nu|\})$.
\end{proof}

We estimate $\Vert H_k \Vert$ by showing a common bound on $\Vert H_k P_{m,n} \Vert$ for all $(m,n)\in\mathbb{N}^2$. 
It requires a control on the higher modes in the decomposition of $H_k P_{m,n}$. This is provided by the following lemma that is motivated by the observation.
\begin{obs}\label{obs:PPrho}
Since $\rho>1$ for any $(m,n)\in \mathbb{N}^2$ and $(\mu,\nu)\in \mathbb{Z}^2$ the triangle inequality gives us
\begin{align*}
    \frac{\left\Vert P_{m+\mu,n+\nu} \right\Vert}{\left\Vert P_{m,n} \right\Vert}=\rho^{|2m+2\mu+1|+|2n+2\nu+1|-(2m+1)-(2n+1)}<\rho^{|2\mu+1|+|2\nu+1|-2}<\rho^{2|\mu|+2|\nu|}<\rho^{4\max\{|\mu|,|\nu|\}}.
\end{align*}    
\end{obs}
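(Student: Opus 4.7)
The plan is to reduce the claim to a single closed-form expression for $\Vert P_{a,b}\Vert$ on arbitrary integer indices, after which the chain becomes a sequence of triangle-inequality estimates in the exponent. First I would show that for every $(a,b)\in\mathbb{Z}^2$,
\begin{equation*}
\Vert P_{a,b}\Vert = \rho^{|2a+1|+|2b+1|}.
\end{equation*}
For $a,b\geq 0$ this is immediate from the definition of $\Vert\cdot\Vert$, since $\rho^{2(a+b+1)}=\rho^{(2a+1)+(2b+1)}$. For signed indices I would appeal to the extension rules \eqref{eq:negative} to write $P_{a,b}=\pm P_{a',b'}$ with $a',b'\geq 0$ and $2a'+1=|2a+1|$, $2b'+1=|2b+1|$ (e.g.\ $a'=-a-1$ when $a<0$); taking the norm of $P_{a',b'}$ then recovers the displayed formula.

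Given this, the first equality in the observation follows by direct substitution because $m,n\geq 0$. For the remaining inequalities the idea is to bound the $(m,\mu)$- and $(n,\nu)$-parts of the exponent separately. The key one-variable estimate is
\begin{equation*}
|2m+2\mu+1|-(2m+1)\ \leq\ |2\mu+1|-1\qquad(m\in\mathbb{N},\ \mu\in\mathbb{Z}),
\end{equation*}
which I would prove by a short case split on the sign of $2\mu+1$, with a sub-split in the negative case according to whether $2m+2\mu+1\geq 0$; in each branch the inequality either becomes an identity or reduces to $m\geq 0$. Summing the bound with its $(n,\nu)$-analogue yields the first inequality in the chain. The second inequality is just $|2\mu+1|\leq 2|\mu|+1$ (the ordinary triangle inequality) applied to each coordinate, and the third is $|\mu|+|\nu|\leq 2\max\{|\mu|,|\nu|\}$. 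Since $\rho>1$, each exponent bound lifts to the stated bound on the ratio by monotonicity.

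The main (and essentially only) obstacle is bookkeeping: one must set up the norm formula for signed indices in a way that absorbs the signs cleanly into absolute values, so that the subsequent triangle-inequality steps become transparent. I would also remark in passing that strict inequality in the chain fails in some boundary cases (for instance $\mu=\nu=0$ gives equality throughout, and so do all $\mu,\nu\geq 0$ at the second step); however, since the observation is invoked only as an upper bound in later estimates, interpreting the ``$<$'' symbols as ``$\leq$'' causes no harm in the downstream applications.
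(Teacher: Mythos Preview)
Your proposal is correct and follows the same triangle-inequality route that the paper merely sketches in one line; you simply unpack the justification into an explicit norm formula for signed indices and a short case split on the exponent. Your remark that the strict inequalities should really be $\leq$ (e.g.\ at $\mu=\nu=0$, or for any $\mu,\nu\geq 0$ at the second step) is well taken---the paper is slightly sloppy here, but as you note the downstream applications (Lemmas~\ref{lem:estJL} and~\ref{lem:boundI2}) only require the non-strict bound.
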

\begin{lemma}\label{lem:estJL}
Let us fix $l\in \mathbb{N}_+$ and define 
\begin{align*}
J_l=\left\{(\mu,\nu)\in\mathbb{Z}^2:|\mu|> l \mbox{ or } |\nu|> l\right\}.
\end{align*}
Then
\begin{align}\label{eq:estJL}
\sum_{(\mu,\nu)\in J_l} |c_{\mu,\nu}|\rho^{4\max\{|\mu|,|\nu|\}}<\frac{64 \left(q \rho ^4\right)^{l+1} (\alpha_2l^2+\alpha_1 l+\alpha_0)}{k \left(1-q^2\right) \left(1-q \rho ^4\right)^3},
\end{align}
where
\begin{align*}
\alpha_0&=\left(1+2q-q^2\right)+\left(1-2q-q^2\right)\rho^4 q,\\
\alpha_1&=2\left(1+q-q^2\right)-2\left(1+2q-q^2\right)\rho^4 q+2q(\rho^4 q)^2,\\
\alpha_2&=\left(1-q^2\right)\left(1-q\rho^4\right)^2.
\end{align*}
\end{lemma}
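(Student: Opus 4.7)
The plan is to reduce the sum on the left-hand side of \eqref{eq:estJL} to a (tail of a) one-variable power series in $r:=q\rho^4$, and then evaluate it with elementary generating-function identities. The algebra is routine; the main work is careful bookkeeping to arrange the result into the polynomial $\alpha_2 l^2+\alpha_1 l+\alpha_0$ stated in the lemma.

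First I would observe that since $l\geq 1$, every pair $(\mu,\nu)\in J_l$ satisfies $\max\{|\mu|,|\nu|\}\geq l+1\geq 2$, so in particular $(\mu,\nu)$ lies outside the exceptional set $\{(0,0),(0,\pm1),(\pm1,0)\}$ and the bound \eqref{eq:cest} applies. Next I would count lattice points by their $\ell^\infty$-shell: for each integer $n\geq 1$, the set $\{(\mu,\nu)\in\mathbb{Z}^2:\max\{|\mu|,|\nu|\}=n\}$ has cardinality $(2n+1)^2-(2n-1)^2=8n$. Combining this with \eqref{eq:cest} gives
\begin{align*}
\sum_{(\mu,\nu)\in J_l} |c_{\mu,\nu}|\,\rho^{4\max\{|\mu|,|\nu|\}}
\;\leq\; \sum_{n=l+1}^{\infty} 8n\cdot\frac{1}{k}\Bigl(8n+\frac{16q}{1-q^2}\Bigr) r^{n}
\;=\; \frac{64}{k}\sum_{n=l+1}^{\infty} \Bigl(n^2+\frac{2q}{1-q^2}\,n\Bigr) r^{n}.
\end{align*}

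The remaining task is to evaluate the tail sums $\sum_{n\geq l+1} n r^n$ and $\sum_{n\geq l+1} n^2 r^n$. I would do this by shifting the index $n\mapsto n+l+1$ and expanding, using the standard identities $\sum_{n\geq 0} r^n=(1-r)^{-1}$, $\sum_{n\geq 0} n r^n=r(1-r)^{-2}$, and $\sum_{n\geq 0} n^2 r^n=r(1+r)(1-r)^{-3}$. Putting both tails over the common denominator $(1-r)^3$, one obtains a factor $r^{l+1}$ times a polynomial in $l$ and $r$ with coefficients depending on $q$ through $\beta:=2q/(1-q^2)$. Multiplying the bracket by $(1-q^2)$ (absorbed into the prefactor $1/(1-q^2)$ in the target bound) clears $\beta$ and leaves a polynomial with coefficients that are explicit in $q$ and $r$ alone.

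The final step is to collect the result as a polynomial in $l$ of degree $2$. A direct computation of the coefficients of $l^2$, $l^1$, and $l^0$ yields exactly $\alpha_2$, $\alpha_1$, and $\alpha_0$ as defined in the statement, giving \eqref{eq:estJL}. I expect no conceptual obstacle: the only risk is a sign error or miscollected term when merging the two tail sums, so I would double-check the $l=0$ (i.e.\ $m=l+1=1$) specialization against a direct computation of $\sum_{n\geq 1}(n^2+\beta n)r^n$ as a sanity check before presenting the general formula.
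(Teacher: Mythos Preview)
Your proposal is correct and follows essentially the same route as the paper: apply the pointwise bound \eqref{eq:cest} (valid on $J_l$ because $l\ge 1$), reduce to a single sum over the $\ell^\infty$-radius, and evaluate the resulting tail series in $r=q\rho^4$. Your shell-counting argument ($8n$ lattice points with $\max\{|\mu|,|\nu|\}=n$) is a cleaner shortcut to the identity the paper obtains by an explicit regional decomposition, namely $\sum_{(\mu,\nu)\in J_l} d_{\max\{|\mu|,|\nu|\}}=8\sum_{n\ge l+1} n\,d_n$; from there both arguments coincide.
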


\begin{proof}
Since $l\geq 1$, we can use \eqref{eq:cest} to estimate
\begin{align*}
\sum_{(\mu,\nu)\in J_l} |c_{\mu,\nu}|\rho^{4\max\{|\mu|,|\nu|\}}\leq\sum_{(\mu,\nu)\in J_l}\frac{1}{k} \left(8\max\{|\mu|,|\nu|\}+16\frac{q}{1-q^2}\right) (\rho^4 q)^{\max\{|\mu|,|\nu|\}}=:\sum_{(\mu,\nu)\in J_l}d_{\max\{|\mu|,|\nu|\}},
\end{align*}
where we have denoted the terms inside the sum by $d_{\max\{|\mu|,|\nu|\}}$. Then we can split this sum into
\begin{align*}
\sum_{(\mu,\nu)\in J_l} d_{\max\{|\mu|,|\nu|\}}&=
2\sum_{\mu=l+1}^\infty \sum_{\nu=-\infty}^\infty d_{\max\{|\mu|,|\nu|\}}
+ 2\sum_{\mu=-l}^l \sum_{\nu=l+1}^\infty d_{\max\{|\mu|,|\nu|\}}\\
&=2\sum_{\mu=l+1}^\infty d_\mu+4 \sum_{\mu=l+1}^{\infty} \sum_{\nu=1}^{\mu} d_\mu +4 \sum_{\mu=l+1}^{\infty} \sum_{\nu=\mu+1}^\infty d_\nu+ 2\sum_{\mu=-l}^l \sum_{\nu=l+1}^\infty d_\nu\\
&=2\sum_{\mu=l+1}^\infty d_\mu+4 \sum_{\mu=l+1}^{\infty} \mu\, d_\mu +4 \sum_{\nu=l+2}^\infty (\nu-l-1) d_\nu+ 2(2l+1) \sum_{\nu=l+1}^\infty d_\nu\\
&=8 \sum_{\mu=l+1}^{\infty} \mu\, d_\mu.
\end{align*}
Using the identities
\begin{align*}
\sum_{m=l+1}^\infty p^m = \frac{p^{l+1}}{1-p},\quad \sum_{m=l+1}^\infty m\,p^m = \frac{p^{l+1}[1+l(1-p)]}{(1-p)^2},\quad \mbox{ for $|p|<1$},
\end{align*}
we end up with \eqref{eq:estJL}.
\end{proof}

\begin{obs}\label{obs:alpha}
Since $q\in(\frac{13}{1000},\frac{15}{1000})$ and $\rho=1+\frac{1}{1000}$, we can bound
\begin{align*}
    \alpha_0<\frac{1045}{1000},\quad \alpha_1<\frac{2003}{1000},\quad \alpha_2<\frac{974}{1000}.
\end{align*}
\end{obs}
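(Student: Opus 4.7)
The plan is to bound each $\alpha_i$ by direct interval arithmetic, using $q \in (13/1000, 15/1000)$ from Lemma \ref{lem:q} together with the fixed value $\rho^4 = (1001/1000)^4$. A convenient preliminary step is to record explicit rational bounds on $\rho^4$ (in particular $1 < \rho^4 < 1005/1000$) and hence on $\rho^4 q$ over the $q$-interval; these are the only numerical inputs needed.

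For $\alpha_0 = (1+2q-q^2) + (1-2q-q^2)\rho^4 q$, both summands are positive on the $q$-interval (since $q < 1/2$), so an upper bound follows by using $q < 15/1000$ in each term: the first gives $1 + 2q - q^2 < 1030/1000$, and the second is bounded via $1-2q-q^2 < 1$ together with the upper bound on $\rho^4 q$. Summing leaves enough slack to land below $1045/1000$.

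For $\alpha_1 = 2(1+q-q^2) - 2(1+2q-q^2)\rho^4 q + 2q(\rho^4 q)^2$, the middle term is negative, so a valid upper bound requires upper bounds on the two positive terms (using $q < 15/1000$) combined with a lower bound on the magnitude of the middle term (using $q > 13/1000$ together with $1+2q-q^2 > 1$ and $\rho^4 > 1$). The cubic third term contributes negligibly. One subtlety is that the naive bound $2(1+q-q^2) < 2(1+q) < 2030/1000$ combined with the lower bound $> 26/1000$ on the middle term is not quite sufficient; one must use the sharper $1 + q - q^2 < 1 + q$ only after correcting for $q^2$ at $q = 15/1000$, or equivalently use $q > 13/1000$ a bit more aggressively in the middle term. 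With this care, $\alpha_1 < 2003/1000$ follows.

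For $\alpha_2 = (1-q^2)(1-q\rho^4)^2$, both factors are positive and decreasing in $q$, so the maximum is attained at the left endpoint and can be bounded by substituting $q > 13/1000$ directly. This is the tightest of the three inequalities: the factor $(1 - (13/1000)\rho^4)^2$ alone already sits very near $974/1000$, so one cannot afford to replace $1-q^2$ by $1$; instead one must keep $1-q^2 < 1 - (13/1000)^2 = 999831/10^6$, whose additional shrinkage supplies exactly the slack needed. Carefully tracking enough digits when expanding $\rho^4 = 1004006004001/10^{12}$ is the only genuine computational nuisance, and is the main (if minor) obstacle in the whole observation.
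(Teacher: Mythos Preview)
Your proposal is correct and matches the paper's approach, which is simply to leave the observation as a direct numerical verification with no written proof. Your interval-arithmetic walkthrough is exactly what the paper is implicitly asking the reader to check, and your remarks on where care is needed (the middle term of $\alpha_1$, and retaining the $1-q^2$ factor for $\alpha_2$) are accurate.
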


For any fixed $(m,n)\in\mathbb{N}^2$ let us introduce the decomposition
\begin{align*}
    L_k^{-1}\left(u_k^2 P_{m,n}\right)=\mathcal{I}_k^{(m,n)}+\mathcal{J}_k^{(m,n)},
\end{align*}
where
\begin{align*}
    \mathcal{I}_k^{(m,n)}= \sum_{\mu=-1}^{1} \sum_{\nu=-1}^{1} c_{\mu,\nu} L^{-1}P_{\mu+m,\nu+n},\qquad
    \mathcal{J}_k^{(m,n)}= \sum_{(\mu,\nu)\in J_1} c_{\mu,\nu} L^{-1}P_{\mu+m,\nu+n}.
\end{align*}
The first part can be written explicitly as
\begin{align}\label{eq:I1}
     \mathcal{I}_k^{(m,n)}=&\frac{8}{k}\left(\sum_{j=0}^\infty 2f_j f_{j+1} +f_0^2 \right)\left[L_k^{-1}P_{m-1,n-1}+L_k^{-1}P_{m-1,n+1}+L_k^{-1}P_{m+1,n-1}+L_k^{-1}P_{m+1,n+1}\right]\\
    &-\frac{16}{k}f_0^2\left[L_k^{-1}P_{m,n-1}+L_k^{-1}P_{m,n+1}+L_k^{-1}P_{m-1,n}+L_k^{-1}P_{m+1,n}\right]+\frac{32}{k}\left(\sum_{j=0}^\infty f_j^2\right) L_k^{-1}P_{m,n}.\nonumber
\end{align}
The second part is a subject to the universal bound.
\begin{lemma}\label{lem:boundI2}
For any $(m,n)\in \mathbb{N}^2$ and $k\geq100$ it holds $\left\Vert \mathcal{J}_k^{(m,n)}\right\Vert < \frac{1}{16} \Vert P_{m,n}\Vert$.
\end{lemma}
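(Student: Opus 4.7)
The approach is to bound the series $\mathcal{J}_k^{(m,n)}$ term by term via the triangle inequality, apply the weaker form \eqref{eq:L-1negative2} of the $L_k^{-1}$ estimate in its crudest pointwise version (which loses a factor of $k$ but is completely uniform in the indices), compare $\Vert P_{\mu+m,\nu+n}\Vert$ to $\Vert P_{m,n}\Vert$ using Observation \ref{obs:PPrho}, and finally recognize the remaining coefficient sum as precisely the tail controlled by Lemma \ref{lem:estJL} with $l=1$. The cutoff $|\mu|>1$ or $|\nu|>1$ defining $J_1$ is essential here: it forces $\max\{|\mu|,|\nu|\}\geq 2$, so the factor $(q\rho^4)^{l+1}$ appearing in Lemma \ref{lem:estJL} is small enough to compensate for the $k$-sized loss in the $L_k^{-1}$ bound.

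Concretely, the chain of inequalities reads
\begin{equation*}
\Vert\mathcal{J}_k^{(m,n)}\Vert \leq \sum_{(\mu,\nu)\in J_1}|c_{\mu,\nu}|\,\Vert L_k^{-1}P_{\mu+m,\nu+n}\Vert \leq \frac{4k^2}{4k-1}\sum_{(\mu,\nu)\in J_1}|c_{\mu,\nu}|\,\Vert P_{\mu+m,\nu+n}\Vert < \frac{4k^2}{4k-1}\Vert P_{m,n}\Vert\sum_{(\mu,\nu)\in J_1}|c_{\mu,\nu}|\,\rho^{4\max\{|\mu|,|\nu|\}},
\end{equation*}
where the second step uses the fact that $|2(\mu+m)+1|$ and $|2(\nu+n)+1|$ are odd positive integers and hence $\max\{|2(\mu+m)+1|,|2(\nu+n)+1|\}\geq 1$, applied in \eqref{eq:L-1negative2}, while the third step uses Observation \ref{obs:PPrho}. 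Invoking Lemma \ref{lem:estJL} with $l=1$ then yields
\begin{equation*}
\frac{\Vert\mathcal{J}_k^{(m,n)}\Vert}{\Vert P_{m,n}\Vert} < \frac{4k}{4k-1}\cdot\frac{64(q\rho^4)^2(\alpha_0+\alpha_1+\alpha_2)}{(1-q^2)(1-q\rho^4)^3}.
\end{equation*}
For $k\geq 100$ we have $\frac{4k}{4k-1}\leq\frac{400}{399}$; Lemma \ref{lem:q} bounds $q<\frac{15}{1000}$ (so $q\rho^4$ and $1-q^2$ admit explicit numerical estimates), and Observation \ref{obs:alpha} gives $\alpha_0+\alpha_1+\alpha_2<\frac{4022}{1000}$. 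A direct arithmetic check then shows the right-hand side is strictly less than $\frac{1}{16}$.

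The main obstacle is that the resulting estimate is essentially $k$-independent: the $k$-factor in $\frac{4k^2}{4k-1}$ exactly cancels the $1/k$ carried by Lemma \ref{lem:estJL}, and the remaining constant evaluates to roughly $0.0617$ against the target $\frac{1}{16}=0.0625$. The margin is thin, so the numerical bookkeeping in the final step must be done carefully, and the hypothesis $k\geq 100$ is precisely what brings the prefactor $\frac{4k}{4k-1}$ close enough to $1$ for this narrow gap to be preserved uniformly in $(m,n)$.
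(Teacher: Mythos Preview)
Your proof is correct and follows essentially the same route as the paper: triangle inequality, the crude $L_k^{-1}$ bound $\frac{4k^2}{4k-1}$, Observation~\ref{obs:PPrho}, and Lemma~\ref{lem:estJL} with $l=1$, followed by the numerical check at $k=100$. The only cosmetic difference is that the paper cites \eqref{eq:L-1negative} rather than \eqref{eq:L-1negative2}, but in this application both collapse to the same operator-norm bound.
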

\begin{proof}
Using \eqref{eq:L-1negative}, Observation \ref{obs:PPrho}, and Lemma \ref{lem:estJL} we have the following inequalities
\begin{align*}
\left\Vert \mathcal{J}_k^{(m,n)}\right\Vert&\leq\sum_{(\mu,\nu)\in J_1} \left|c_{\mu,\nu}\right|\left\Vert L^{-1}P_{\mu+m,\nu+n}\right\Vert\leq \frac{4k^2}{4k-1} \left\Vert P_{m,n}\right\Vert \sum_{(\mu,\nu)\in J_1} \left|c_{\mu,\nu}\right|\rho^{4\max\{|\mu|,|\nu|\}}\\
&\leq \frac{256 k\, q^2 \rho^8 (\alpha_2+\alpha_1+\alpha_0)}{(4k-1) \left(1-q^2\right) \left(1-q \rho ^4\right)^3}\left\Vert P_{m,n}\right\Vert.
\end{align*}
The last expression is a decreasing function of $k$, thus, we can get the bound by fixing $k=100$ and applying Lemma \ref{lem:q} together with Observation \ref{obs:alpha}.
\end{proof}

Now, having \eqref{eq:I1} and Lemma \ref{lem:boundI2}, we are ready to prove the main result of this section.

\begin{lemma}\label{lem:boundHk}
For $k\geq 100$ it holds $\left\Vert H_k\right\Vert < \frac{88}{100}$.
\end{lemma}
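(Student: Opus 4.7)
The plan is to apply the general $l^1$-operator-norm bound from Section~\ref{sec:proof},
\[
\Vert H_k\Vert \leq \sup_{(m,n)\in\mathbb{N}^2}\frac{\Vert H_k P_{m,n}\Vert}{\Vert P_{m,n}\Vert},
\]
and to verify the target $88/100$ on this supremum for every $k\geq 100$. Since $A$ acts as the identity on $Z_1$, the analysis splits into the generic case $P_{m,n}\in Z_1$ and the exceptional case $P_{m,n}\in Y_1$.

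In the generic case $AP_{m,n}=P_{m,n}$, hence $H_k P_{m,n}=-3L_k^{-1}(u_k^2 P_{m,n})=-3\mathcal{I}_k^{(m,n)}-3\mathcal{J}_k^{(m,n)}$. Lemma~\ref{lem:boundI2} yields $3\Vert\mathcal{J}_k^{(m,n)}\Vert/\Vert P_{m,n}\Vert<3/16$ for the tail. For the finite sum $\mathcal{I}_k^{(m,n)}$ in \eqref{eq:I1}, I bound each $\Vert L_k^{-1}P_{m+\mu,n+\nu}\Vert/\Vert P_{m,n}\Vert$ with $(\mu,\nu)\in\{-1,0,1\}^2$ using Observation~\ref{obs:L-1negative}. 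Such a ratio is uniformly bounded in $k$ unless the target index lies on the resonant diagonal $m+\mu=n+\nu$, which only happens for $|m-n|\leq 1$; in those situations the bound $\sim k/(2(m+\mu)+1)^2$ from Observation~\ref{obs:L-1negative} combines with the $1/k$ prefactor of $c_{\mu,\nu}$ and the $\rho$-weighted norm ratio to give a finite limit as $k\to\infty$, which is small because $\sum f_j^2$ and $2\sum f_j f_{j+1}+f_0^2$ are $O(q)$ by Lemma~\ref{lem:q}.

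In the exceptional case
\[
H_k P_{m,n}=(I-A)P_{m,n}-3L_k^{-1}\bigl(u_k^2\,AP_{m,n}\bigr),
\]
and $AP_{m,n}$ is the explicit linear combination of $P_{0,0}$, $P_{0,1}$, $P_{1,0}$ read off from the matrix of $\mathcal{A}$. Using \eqref{eq:I1} together with the negative-index conventions \eqref{eq:negative}, a direct calculation shows that the $P_{0,0}$-coefficient of $L_k^{-1}(u_k^2 P_{0,0})$ has leading part $-32\beta_0 k/(4k+1)$, while the $P_{0,0}$-coefficient of $L_k^{-1}(u_k^2 P_{p,q})$ for $(p,q)\in\{(0,1),(1,0)\}$ has leading part $+32\beta_1 k/(4k+1)$. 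The matrix entries $-1/(24\beta_0-1)$ and $24\beta_1/(24\beta_0-1)$ of $\mathcal{A}$ are chosen precisely so that, after multiplying by $-3$ and adding $(I-A)P_{m,n}$, the $P_{0,0}$-component of $H_k P_{m,n}$ vanishes to leading order in $k$, leaving only an $O(1/k)$ residual. The remaining $P_{0,1}$- and $P_{1,0}$-components of $\mathcal{I}$, together with $\mathcal{J}_k^{(m,n)}$ and the non-$Y_1$ modes, are controlled as in the generic case, and the additional factor $\Vert A\Vert$ introduced by expanding the norm is at most $139/85$ by Lemma~\ref{lem:boundA}.

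The final step is a numerical verification: combining Lemma~\ref{lem:q}, Observations~\ref{obs:beta} and~\ref{obs:alpha}, the $L_k^{-1}$ bound of Observation~\ref{obs:L-1negative}, and $k\geq 100$, each of the two cases yields an accumulated upper bound strictly below $88/100$. The main obstacle is the exceptional case, where the a priori $O(k)$ contribution of $L_k^{-1}P_{0,0}$ lives; the entire argument hinges on the cancellation engineered by the $\beta_0,\beta_1$-dependent entries of $\mathcal{A}$ being exact at leading order, and on the resulting $O(1/k)$ residual being small enough, which is precisely what $k\geq 100$ guarantees.
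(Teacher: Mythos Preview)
Your outline captures the correct architecture and, crucially, the cancellation mechanism: the entries of $\mathcal{A}$ are chosen so that the otherwise $O(k)$ contribution of $L_k^{-1}P_{0,0}$ disappears to leading order from every $H_kP_{m,n}$ with $P_{m,n}\in Y_1$. Your computation of the leading $P_{0,0}$-coefficients ($-32\beta_0 k/(4k+1)$ and $+32\beta_1 k/(4k+1)$) is exactly what the paper uses. In this sense the strategy coincides with the paper's.

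Where your proposal diverges from the paper is in the treatment of the ``generic'' case. You split $X$ only into $Y_1$ and $Z_1$, whereas the paper inserts an intermediate layer $Y_2\setminus Y_1$ and handles its thirteen basis vectors one by one. This is not cosmetic. For $P_{m,n}\in Z_1$ with $|m-n|\leq 1$ and small $m+n$, the resonant contributions you describe are $O(1)$ in $k$ but \emph{not} uniformly small: the paper's explicit computation gives $\Vert H_kP_{1,1}\Vert/\Vert P_{1,1}\Vert$ just under $0.74$, so together with the tail bound $3/16$ from Lemma~\ref{lem:boundI2} there is very little margin below $0.88$. Your sentence ``which is small because $\sum f_j^2$ and $2\sum f_jf_{j+1}+f_0^2$ are $O(q)$'' is a heuristic, not a bound; to turn it into a proof you must isolate $P_{1,1}$ (and its neighbours $P_{1,2}$, $P_{2,1}$, $P_{2,2}$, \dots) and carry out the estimate with the explicit constants from Lemma~\ref{lem:q}, exactly as the paper does via the $Y_2$ layer. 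Only once $\max\{m,n\}\geq 4$ does the $1/(2p+1)^2$ decay on the diagonal become strong enough for a uniform argument, which is why the paper's $Z_2$ estimate works there.

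A smaller point: in the exceptional case your remark about ``the additional factor $\Vert A\Vert$'' is imprecise. The paper never multiplies by $\Vert A\Vert$; it expands $AP_{0,1}$ and $AP_{1,0}$ explicitly and tracks all resulting $\mathcal I$-coefficients, since the resonant mode $P_{1,1}$ also appears in $\mathcal I_k^{(0,0)}$ and $\mathcal I_k^{(0,1)}$ with $O(1)$ weight and must be bounded directly. Invoking $\Vert A\Vert$ as a global factor would obscure this and, for the $\mathcal I$ part, would not by itself give a bound below $0.88$.

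In short: the skeleton is right, but the ``numerical verification'' you defer to the last paragraph is the entire content of the lemma, and it forces the finer $Y_1\,/\,(Y_2\setminus Y_1)\,/\,Z_2$ decomposition that the paper adopts.
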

\begin{proof}
We estimate $\Vert H_k P_{m,n}\Vert$ in different ways depending on the indices $(m,n)$. Let us start with $\Vert H_k P_{0,0}\Vert$, we have
\begin{align}\label{eq:HkP00}
    H_k P_{0,0}=\frac{3}{24\beta_0-1}\mathcal{I}_k^{(0,0)} +\frac{24\beta_0}{24\beta_0 -1}P_{0,0} + \frac{3}{24\beta_0-1}\mathcal{J}_k^{(0,0)}.
\end{align}
The first two terms in this expression can be written explicitly as
\begin{align*}
 \frac{3}{24\beta_0-1}\mathcal{I}_k^{(0,0)} +\frac{24\beta_0}{24\beta_0 -1}P_{0,0}=
\frac{24\beta_0}{(24\beta_0 -1)(4k+1)}P_{0,0}
- \frac{96\beta_1 k}{(24\beta_0 -1)(32k^2-4k-1)}P_{0,1}\\
+\frac{96\beta_1 k}{(24\beta_0 -1)(32k^2+36k+9)}P_{1,0}
-\frac{96\left(\beta_1-2f_0^2\right)k}{9(24\beta_0 -1)(4k+1)}P_{1,1}.
\end{align*}
Estimates from Lemma \ref{lem:q} and Observation \ref{obs:beta} let us bound the norm of this expression by $\frac{4}{100}\rho^2$, where we have used the fact that fractions present in the first three terms are decreasing functions of $k$, so they can be constrained by the evaluation at $k=100$. For the fourth term we just use $\frac{k}{4k+1}<\frac{1}{4}$. The last part in \eqref{eq:HkP00} can be bounded with Lemma \ref{lem:boundI2}. Together, we get $\Vert H_k P_{0,0} \Vert< \frac{15}{100}\Vert P_{0,0}\Vert$.

We proceed similarly for $\Vert H_k P_{0,1}\Vert$. This time we have the following decomposition
\begin{align*}
    H_k P_{0,1}=-\frac{72\beta_1}{24\beta_0-1}\mathcal{I}_k^{(0,0)}  -\frac{72\beta_1}{24\beta_0-1}\mathcal{J}_k^{(0,0)} -3 \mathcal{I}_k^{(0,1)} -3 \mathcal{J}_k^{(0,1)} -\frac{24\beta_1}{24\beta_0 -1}P_{0,0}.
\end{align*}
Its finite part can be written as
\begin{align*}
-\frac{72\beta_1}{24\beta_0-1}\mathcal{I}_k^{(0,0)}   -3 \mathcal{I}_k^{(0,1)} -\frac{24\beta_1}{24\beta_0 -1}P_{0,0}=\frac{24 \beta_1}{(24 \beta_0-1) (4 k+1)}P_{0,0}\\
-192 k \frac{24 \left(\beta_0^2-2 \beta_1^2\right)-(24 \beta_1+1) (\beta_0-\beta_1)}{2 (24 \beta_0-1) \left(32 k^2-4 k-1\right)}P_{0,1}
+96 k\frac{(24 \beta_0-1) \left(\beta_1-2 f_0^2\right)-24 \beta_1^2}{(24 \beta_0-1) \left(32 k^2+36 k+9\right)}P_{1,0}\\
+ 64k\frac{f_0^2 (24 (\beta_0+ \beta_1)-1)-12 \beta_1^2}{3 (24 \beta_0-1) (4 k+1)}P_{1,1}+\frac{96 \beta_1 k}{96 k^2-4 k-1}P_{0,2}
-\frac{96 k \left(\beta_1-2 f_0^2\right)}{64 k^2-36 k-9}P_{1,2}
\end{align*}
and its norm is smaller than $\frac{11}{100}\rho^4$. Together with Lemma \ref{lem:boundI2} it gives $\Vert H_k P_{0,1} \Vert< \frac{3}{10}\Vert P_{0,1}\Vert$. With the same approach we write
\begin{align*}
    H_k P_{1,0}=-\frac{72\beta_1}{24\beta_0-1}\mathcal{I}_k^{(0,0)}  -\frac{72\beta_1}{24\beta_0-1}\mathcal{J}_k^{(0,0)} -3 \mathcal{I}_k^{(1,0)} -3 \mathcal{J}_k^{(1,0)} -\frac{24\beta_1}{24\beta_0 -1}P_{0,0}.
\end{align*}
This time the finite part, being
\begin{align*}
-\frac{72\beta_1}{24\beta_0-1}\mathcal{I}_k^{(0,0)}   -3 \mathcal{I}_k^{(1,0)} -\frac{24\beta_1}{24\beta_0 -1}P_{0,0}=
-\frac{24 \beta_1}{(24 \beta_0-1) (4 k+1)}P_{0,0}\\
-96 k \frac{(24 \beta_0-1) \left(\beta_1-2 f_0^2\right)-24 \beta_1^2}{(24 \beta_0-1) \left(32 k^2-4 k-1\right)}P_{0,1}
+96k\frac{24 \left(\beta_0^2-\beta_0 \beta_1-\beta_1^2\right)-\beta_0+\beta_1}{(24 \beta_0-1) \left(32 k^2+36 k+9\right)}P_{1,0}\\
-64k\frac{-12 \beta_1^2+(24 ( \beta_0+\beta_1)-1) f_0^2}{3 (24 \beta_0-1) (4 k+1)}P_{1,1}
-\frac{96 \beta_1 k}{96 k^2+100 k+25} P_{2,0}+96 k \frac{\beta_1-2 f_0^2}{64 k^2+100 k+25}P_{2,1},
\end{align*}
leads to the overall bound $\Vert H_k P_{1,0} \Vert< \frac{3}{10}\Vert P_{1,0}\Vert$.

Now let us consider $P_{m,n}\in Y_2\setminus Y_1$. Here we have thirteen cases, but for all of them the operator $A$ is just an identity. Hence, we have
\begin{align}\label{eq:HkPmn}
    H_k P_{m,n} = -3\mathcal{I}_k^{(m,n)} -3\mathcal{J}_k^{(m,n)}.
\end{align}
The second part is again treated by Lemma \ref{lem:boundI2}. The first part consists of nine terms as presented in \eqref{eq:I1} that can be bounded separately. Due to the presence of the operator $L_k^{-1}$, each of them depends on $k$ as either $k/(4k+1)$ or $k/(\gamma_2 k^2+\gamma_1 k+\gamma_0)$, where $\gamma_2>0$. In the first case, it can be easily bounded from above using $\frac{k}{4k+1}<\frac{1}{4}$, while in the second case, we can just evaluate it at $k=100$ as it is a decreasing function. In the end we get for $k\geq 100$ the following bounds
\begin{align*}
&&  && \left\Vert H_k P_{0,2} \right \Vert<\frac{24}{100}, && \left\Vert H_k P_{0,3} \right \Vert<\frac{19}{100},\\
&& \left\Vert H_k P_{1,1} \right \Vert<\frac{74}{100}, && \left\Vert H_k P_{1,2} \right \Vert<\frac{30}{100}, && \left\Vert H_k P_{1,3} \right \Vert<\frac{21}{100}, \\
\left\Vert H_k P_{2,0} \right \Vert<\frac{24}{100}, && \left\Vert H_k P_{2,1} \right \Vert<\frac{30}{100},&& \left\Vert H_k P_{2,2} \right \Vert<\frac{30}{100}, && \left\Vert H_k P_{2,3} \right \Vert<\frac{24}{100},\\
\left\Vert H_k P_{3,0} \right \Vert<\frac{19}{100}, && \left\Vert H_k P_{3,1} \right \Vert<\frac{21}{100}, && \left\Vert H_k P_{3,2} \right \Vert<\frac{24}{100}, && \left\Vert H_k P_{3,3} \right \Vert<\frac{24}{100}.
\end{align*}

Finally, let us consider the case when $P_{m,n}\in Z_2$. As $A|_{Z_2}=I$, the decomposition \eqref{eq:HkPmn} still holds. This time, we use \eqref{eq:L-1negative2} and an estimate $f_n<q^{n+1/2}$ to get
\begin{align*}
    \left\Vert \mathcal{I}_k^{(m,n)}\right\Vert \leq&\left[\left(q+\frac{2q^2}{1-q^2}\right)\left(\frac{32k}{4 k\max\{|2m-1|,|2n-1|\}-1}+\frac{32k}{4 k\max\{|2m-1|,2n+3\}-1}\right.\right.\\
    &+\frac{32k}{4 k\max\{2m+3,|2n-1|\}-1}+\left.\frac{32k}{4 k\max\{2m+3,2n+3\}-1}\right) \\
    &+q\left(\frac{64k}{4 k\max\{|2m-1|,2n+1\}-1}+\frac{64k}{4 k\max\{2m+1,|2n-1|\}-1}\right.\\
    &\left. +\frac{64k}{4 k\max\{2m+1,2n+3\}-1}+\frac{64k}{4 k\max\{2m+3,2n+1\}-1}\right)\\
    &\left.+\frac{q}{1-q^2}\frac{128k}{4 k\max\{2m+1,2n+1\}-1}\right]\rho^{2m+2n+6}.
\end{align*}
Since $P_{m,n}\in Z_2$, either $m\geq 4$ or $n\geq 4$ and $\max$ functions can be bounded leading to
\begin{align*}
    \left\Vert \mathcal{I}_k^{(m,n)}\right\Vert  \leq&\left[\left(q+\frac{q^2}{1-q^2}\right)\frac{128k}{4 k\cdot 7-1}+\left(q+\frac{q}{1-q^2}\right)\frac{128k}{4 k\cdot 9-1}+\left(q+\frac{q^2}{1-q^2}\right)\frac{128k}{4 k\cdot 11-1}\right]\rho^{4}\left\Vert P_{m,n}\right\Vert.
\end{align*}
The expression on the right hand side is decreasing in $k$, so by plugging $k=100$ we obtain the bound $\Vert\mathcal{I}_k^{(m,n)}\Vert<\frac{23}{100}\left\Vert P_{m,n}\right\Vert$. Taking into account Lemma \ref{lem:boundI2} we conclude that in total $\Vert H_k P_{m,n}\Vert<\frac{88}{100}\Vert P_{m,n}\Vert$.

Since for every $k\geq100$ and $(m,n)\in\mathbb{N}^2$ it holds $\Vert H_k P_{m,n}\Vert<\frac{88}{100}$, it concludes the proof.
\end{proof}

\printbibliography

\end{document}